\let\oldmarginpar\marginpar
\renewcommand\marginpar[1]{\-\oldmarginpar[\raggedleft\tiny #1]%
{\raggedright\tiny #1}}
\theoremstyle{plain}
\newtheorem{thm}[equation]{Theorem}
\newtheorem{lem}[equation]{Lemma}
\newtheorem{prop}[equation]{Proposition}
\newtheorem{cor}[equation]{Corollary}
\theoremstyle{definition}
\newtheorem{defn}[equation]{Definition}
\newtheorem{eg}[equation]{Example}
\theoremstyle{remark}
\newtheorem{rem}[equation]{Remark}
\numberwithin{equation}{section}
\newcommand{\R}{\mathbb{R}}
\newcommand{\Rn}{{\mathbb{R}^n}}
\def\weakto{\rightharpoonup}
\renewcommand{\div}{\divop}
\renewcommand{\phi}{\varphi}
\renewcommand{\epsilon}{\varepsilon}
\def\le{\leqslant}
\def\leq{\leqslant}
\def\ge{\geqslant}
\def\phi{\varphi}
\def\rho{\varrho}
\def\vartheta{\theta}
\def\I{\mathcal{I}}
\def\div{\qopname\relax o{div}}
\def\loc{{\rm loc}}
\def\BV{{\rm BV}}
\newcommand{\inc}[1]{\hyperref[def:aInc]{{\normalfont(Inc){\ensuremath{_{#1}}}}}}
\newcommand{\dec}[1]{\hyperref[def:aDec]{{\normalfont(Dec){\ensuremath{_{#1}}}}}}
\newcommand{\ainc}[1]{\hyperref[def:aInc]{{\normalfont(aInc){\ensuremath{_{#1}}}}}}
\newcommand{\adec}[1]{\hyperref[def:aDec]{{\normalfont(aDec){\ensuremath{_{#1}}}}}}
\newcommand{\adeci}[1]{\hyperref[def:aDeci]{{\normalfont(aDec){\ensuremath{_{#1}^\infty}}}}}
\newcommand{\azero}{\hyperref[def:a0]{{\normalfont(A0)}}}
\newcommand{\aone}{\hyperref[def:a1]{{\normalfont(A1)}}}
\newcommand{\aones}[1]{\hyperref[def:a1s]{{\normalfont(A1-{\ensuremath{{#1}})}}}}
\newcommand{\Phiw}{\Phi_{\text{\rm w}}}
\newcommand{\Phic}{\Phi_{\text{\rm c}}}
\newcommand{\rholiminf}{\bar\rho^{\,\scriptscriptstyle\circ}_{1,\phi}}
\newcommand{\liminftag}{\overline{^{\scriptscriptstyle\circ}_{1,\phi}}}
\newcommand{\rhoBV}{\bar\rho_{1,\phi}}
\newcommand{\Lbar}[1]{{\bar L^{1, \phi}(#1)}}
\newcommand{\Lspace}[1]{{L^{1, \phi}(#1)}}
\date{\today}
\newcommand{\comment}[1]{\vskip.3cm
\fbox{%
\parbox{0.93\linewidth}{\footnotesize #1}}
\vskip.3cm}
\begin{document}

\title{Minimizers of abstract generalized Orlicz--bounded variation energy}

\date{\today}

\author{Michela Eleuteri}
\address{Michela Eleuteri, 
Dipartimento di Scienze Fisiche, Informatiche e Matematiche, 
Università degli Studi di Modena e Reggio Emilia, Italy}
\email{\texttt{michela.eleuteri@unimore.it}}

\author{Petteri Harjulehto}
\address{Petteri Harjulehto,
Department of Mathematics and Statistics,
FI-20014 University of Turku, Finland}
\email{\texttt{petteri.harjulehto@utu.fi}}

\author{Peter Hästö}
\address{Peter Hästö, Department of Mathematics and Statistics,
FI-20014 University of Turku, Finland}
\email{\texttt{peter.hasto@utu.fi}}

\begin{abstract}
A way to measure the lower growth rate of $\phi:\Omega\times [0,\infty) \to [0,\infty)$ is 
to require $t \mapsto \phi(x,t)t^{-r}$ to be increasing in $(0,\infty)$.
If this condition holds with $r=1$, then 
\[
\inf_{u\in f+W^{1, \phi}_0(\Omega)}\int_\Omega \phi(x, |\nabla u|) \, dx
\]
with boundary values $f\in W^{1,\phi}(\Omega)$ does not necessary have a minimizer. However, if 
$\phi$ is replaced by $\phi^p$, then the  
growth condition holds with $r=p > 1$ and thus (under some additional conditions) the corresponding energy integral has a minimizer. We show that a sequence $(u_p)$ of such minimizers convergences when $p \to 1^+$
in a suitable $\BV$-type space involving generalized Orlicz growth and obtain the 
$\Gamma$-convergence of functionals with fixed boundary values and of functionals with fidelity terms. 
\end{abstract}

\keywords{Generalized bounded variation, generalized Orlicz space, Musielak--Orlicz space, $\Gamma$-convergence, minimizer}
\subjclass[2020]{35J60; 26B30, 35B40, 35J25, 46E35, 49J27, 49J45}

\maketitle

\section{Introduction}

Since its introduction in the early 1970's, $\Gamma$-convergence has gained more and more importance, being a very flexible instrument and the most natural notion of convergence for variational problems. Therefore in the last decades much literature has been devoted to the description of the asymptotic behaviour of families of minimum problems, usually depending on some parameters, appearing in various contexts.

$\Gamma$-convergence is mostly applied to families of integral functionals, among which a key role is played by the $p$-Dirichlet integral, 
\[
\int_\Omega |\nabla u|^p\, dx,
\]
as a prototype for integral functionals with standard growth. The case $p = 1$ deserves particular attention: questions like semicontinuity and relaxation require the use of functions of bounded variation, $\BV(\Omega)$, 
instead of the space $W^{1,1}(\Omega)$ \cite{Dal80} unless some additional assumptions are made \cite{FM}. 
Recall that $\BV(\Omega)$ is the Banach space of all $L^1(\Omega)$-functions whose first order distributional derivatives are bounded Radon measures.

It is natural to ask if the functionals for $p\to 1^+$ are connected to the case $p=1$ . 
However, few results are presented in literature, at least to our knowledge.
Juutinen \cite{Juu05} studied the problem for so-called 
{\it functions of least gradient} in $\BV(\Omega) \cap C(\overline{\Omega})$ 
with given boundary values and continuous minimizers of the $p$-Dirichlet energy.
More precisely, for suitable domains $\Omega$, the sequence of the unique $p$-minimizers $u_p \in W^{1,p}_{\loc}(\Omega) \cap C(\overline{\Omega})$
with boundary value $w \in C(\partial \Omega)$ converges uniformly, as $p \to 1^+$, to a function $u \in \BV(\Omega) \cap C(\overline{\Omega})$ that is the unique function of least gradient with boundary value $w$.
Apart from the continuity requirement, these variational problems represent the fundamental minimization problems in their respective spaces $\BV$ and $W^{1,p}$.

The concept of $\Gamma$-convergence, introduced by De Giorgi and Franzoni \cite{DeGF75}, has been systematically 
presented in \cite{Bra02, Dal93}. 
$\Gamma$-convergence was not used by Juutinen \cite{Juu05}, but it seems reasonable to use it in this 
context, as well. 
A family of functionals $\I_\epsilon: X \to \overline{\R}$ is said to \textit{$\Gamma$-converge}
(in topology $\tau$) to $\I: X \to \overline{\R}$
if the following hold for every positive sequence $(\epsilon_i)$ converging to zero:
\begin{enumerate}
 \item[(a)] $\displaystyle \I (u) \le \liminf_{i \to \infty} \I_{\epsilon_i} (u_i)$ 
for every $u \in X$ and every $(u_i)\subset X$ $\tau$-converging to $u$;
 \item[(b)]
$\displaystyle \I (u) \ge \limsup_{i \to \infty} \I_{\epsilon_i} (u_i)$
for every $u \in X$ and some $(u_i)\subset X$ $\tau$-converging to $u$.
\end{enumerate}
We will use this definition for $\I_{\epsilon_i} = E_{1+\epsilon_i}$; for simplicity we denote 
$p_i = 1 + \epsilon_i$ and talk about the $\Gamma$-convergence of $E_{p_i}$ as $p_i\to 1^+$.

In this paper we consider the following functionals with generalized Orlicz growth. 
This is a very active field recently, boosted by work on the double phase problem 
by Baroni, Colombo and Mingione, e.g.\ \cite{BarCM18, ColM15a}. The generalized 
Orlicz (also known as Musielak--Orlicz) case unifies the study of the 
double phase problem and the variable exponent growth, which has been intensively studied 
in the last 20 years \cite{DieHHR11}. We mention as examples the following very recent 
studies \cite{BaaB22, BenHHK21, ChlK21, DefM21, MaeMO21, MizS_pp, PapRZ22, SkrV_pp, Ste22}. 
One motivating factor is applications to image processing initially proposed 
by Chen, Levine and Rao \cite{CheLR06}, see also \cite{HarH21, HarHL08}. 
In this context one is especially interested in the minimization problem with 
lower growth equal to $1$. Here we study such functionals in an abstract 
and general setting.

To state our 
results we use the notation from Section~\ref{sect:background} and refer to \cite{HarH_book} for more background. 
Let $\phi$ be a weak $\Phi$-function and $u_0 \in W^{1, \varphi}(\Omega)$ be the boundary value function.
In Section~\ref{sect:boundaryValues} we study the functional 
$E_p : L^1(\Omega)\to [0,\infty]$ with fixed boundary values
$u_0\in W^{1,\phi^r}(\Omega)$, where $r>1$, for $p\in (1,r)$ defined by 
\begin{equation}
\label{eq:Ep}
E_p(u) := 
\begin{cases}
\displaystyle\int_\Omega \phi(x, |\nabla u|)^p \, dx & \text{when } u - u_0\in L^{1, \phi^p}_{0}(\Omega); \\
+\infty & \text{otherwise.}
\end{cases}
\end{equation}
We define the limit functional $E: L^1 (\Omega) \to [0, \infty]$ by
\begin{equation*} 
E(u) := 
\inf\Big\{\liminf_{i\to\infty}\int_\Omega \phi(x, |\nabla u_i|) \, dx : u_i -u_0 \in L^{1, \phi}_{0}(\Omega)\text{ and } u_i\to u
\text{ in } L^1(\Omega) \Big\}.
\end{equation*}
In Section \ref{sect:fidelityTerm} we consider corresponding functionals with a fidelity term. We assume this time that $f \in L^2(\Omega)$.
For $p >1$ we define $F_p: L^2(\Omega) \to [0, \infty]$ by
\begin{equation}
\label{eq:Fp}
F_p(u) := 
\begin{cases}
\displaystyle\int_\Omega \phi(x, |\nabla u|)^p + |u-f|^2 \, dx & \text{when } u \in L^{1, \phi^p}(\Omega) \cap L^2 (\Omega); \\
+\infty & \text{otherwise.}
\end{cases}
\end{equation}
%
Then we define the limit functional $F: L^2 (\Omega) \to [0, \infty]$ by
\begin{equation*} 
F(u) := 
\inf\Big\{\liminf_{i\to\infty}\int_\Omega \phi(x, |\nabla u_i|) + |u_i-f|^2\, dx : 
u_i \in L^{1, \phi} (\Omega) \cap L^2(\Omega)\text{ and } u_i\to u \text{ in } L^2(\Omega) \Big\}
\end{equation*}
Our main result therefore is the following:

\begin{thm}\label{thm:Gamma}
Let $\Omega\subset\Rn$ be a bounded domain and let $\phi\in \Phiw(\Omega)$ satisfy \azero{} and \dec{}.
\begin{enumerate}
\item 
$E_p$ $\Gamma$-converges to $E$ in the topology of $L^1(\Omega)$ as $p \to 1^+$.
\item
$F_p$ $\Gamma$-converges to $F$ in the topology of $L^2(\Omega)$ as $p \to 1^+$ 
if $C^\infty(\overline\Omega)$ is dense in $L^{1,\phi}(\Omega)\cap L^2(\Omega)$.
\end{enumerate}
\end{thm}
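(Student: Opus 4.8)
The plan is to verify the two defining inequalities of $\Gamma$-convergence — the liminf inequality~(a) and the existence of a recovery sequence~(b) — for each of the two families, treating part~(1) in detail and reducing part~(2) to it.

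For~(a), let $u_i\to u$ in $L^1(\Omega)$; I may assume $\liminf_i E_{p_i}(u_i)<\infty$ and, passing to a subsequence, that the liminf is attained as a limit with every term finite, so that $u_i-u_0\in L^{1,\phi^{p_i}}_0(\Omega)\subset L^{1,\phi}_0(\Omega)$ and hence $(u_i)$ is admissible in the infimum defining $E$. Young's inequality with exponents $p_i$ and $p_i'=p_i/(p_i-1)$ gives pointwise $\phi(x,|\nabla u_i|)\le \tfrac1{p_i}\phi(x,|\nabla u_i|)^{p_i}+\tfrac1{p_i'}$, hence
\[
\int_\Omega \phi(x,|\nabla u_i|)\,dx \le \int_\Omega \phi(x,|\nabla u_i|)^{p_i}\,dx + \frac{p_i-1}{p_i}\,|\Omega|.
\]
As $i\to\infty$ the last term vanishes, and the definition of $E$ yields $E(u)\le\liminf_i E_{p_i}(u_i)$. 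Adding $\int_\Omega|u_i-f|^2\,dx$ to both sides and using $L^2$-convergence for admissibility gives~(a) for $F_{p_i}$ as well.

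For~(b) I may assume $E(u)<\infty$. The heart of the matter is to produce, for the minimising sequence in the definition of $E$, competitors regular enough that the $\phi^p$- and $\phi$-energies can be compared, namely with the higher integrability $\phi(x,|\nabla v|)\in L^r(\Omega)$. Here I exploit that $u_0\in W^{1,\phi^r}(\Omega)$, so $\phi(x,|\nabla u_0|)\in L^r(\Omega)$ with $r>1$, and that $C_c^\infty(\Omega)$ is norm-dense in $L^{1,\phi}_0(\Omega)$, which upgrades to convergence of the energies by \dec{}. Approximating $w-u_0$ by $\psi\in C_c^\infty(\Omega)$ and using the doubling of $\phi$ from \dec{} together with $\phi(x,|\nabla\psi|)\in L^\infty(\Omega)$, I obtain $\phi(x,|\nabla(u_0+\psi)|)^r\lesssim \phi(x,|\nabla u_0|)^r+\phi(x,|\nabla\psi|)^r\in L^1(\Omega)$. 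Consequently the value $E(u)$ is unchanged when the competing sequences are required to consist of functions $v=u_0+\psi$, $\psi\in C_c^\infty(\Omega)$, each of which satisfies $\phi(x,|\nabla v|)\in L^1(\Omega)\cap L^r(\Omega)$ and $v-u_0\in L^{1,\phi^p}_0(\Omega)$ for every $p\in(1,r]$. For such $v$ the bound $\phi(x,|\nabla v|)^p\le \phi(x,|\nabla v|)+\phi(x,|\nabla v|)^r\in L^1(\Omega)$ for $p\in[1,r]$ lets dominated convergence give $\int_\Omega\phi(x,|\nabla v|)^p\,dx\to\int_\Omega\phi(x,|\nabla v|)\,dx$ as $p\to1^+$. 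Choosing, for each $k$, such a $v_k$ with $\|v_k-u\|_{L^1(\Omega)}<1/k$ and $\int_\Omega\phi(x,|\nabla v_k|)\,dx\le E(u)+1/k$, and then $\delta_k>0$ with $E_p(v_k)\le E(u)+2/k$ for $p\in(1,1+\delta_k)$, a diagonal selection yields, for any $p_i\to1^+$, indices $k(i)\to\infty$ with $p_i<1+\delta_{k(i)}$; the sequence $u_i:=v_{k(i)}$ then converges to $u$ in $L^1(\Omega)$ and satisfies $\limsup_i E_{p_i}(u_i)\le E(u)$, which is~(b).

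Part~(2) follows along the same lines, now with $C^\infty(\overline\Omega)\cap L^2(\Omega)$, dense by hypothesis, supplying the regular competitors: such functions have bounded gradient, so $\phi(x,|\nabla v|)\in L^\infty(\Omega)\subset L^r(\Omega)$, while the fidelity term is continuous along $L^2$-convergent sequences and passes unchanged to the limit. I expect the recovery step~(b) to be the main obstacle: one must know that the relaxed energy $E(u)$ (respectively $F(u)$) can be approached by competitors regular enough for $\int_\Omega\phi(x,|\nabla v|)^p\,dx\to\int_\Omega\phi(x,|\nabla v|)\,dx$, and this is precisely where the extra integrability of $u_0$ (respectively the density of $C^\infty(\overline\Omega)$) and the doubling consequence of \dec{} are indispensable.
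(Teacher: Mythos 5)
Your proposal is correct and follows the same overall architecture as the paper: the liminf inequality via Young's inequality $\phi\le\phi^{p_i}+o(1)$ (Lemmas \ref{lem:Gamma-liminf} and \ref{lem:Gamma-liminf-F}), and the recovery sequence by reducing the competitors to $u_0+\psi$ with $\psi\in C^\infty_0(\Omega)$ (resp.\ to $C^\infty(\overline\Omega)$-functions), using \dec{} to upgrade norm convergence in $L^{1,\phi}$ to convergence of the energies, followed by a diagonal selection. The one place where you genuinely diverge is the step $\int_\Omega\phi(x,|\nabla v|)^{p}\,dx\to\int_\Omega\phi(x,|\nabla v|)\,dx$ as $p\to1^+$ for the regularized competitors: you obtain it from the pointwise bound $\phi^p\le\phi+\phi^r$ (valid for $1\le p\le r$) and dominated convergence, exploiting that these competitors satisfy $\phi(x,|\nabla v|)\in L^1(\Omega)\cap L^r(\Omega)$, whereas Lemma~\ref{lem:Gamma-limsup} applies H\"older's inequality with the exponent $q_i=\frac{r-1}{r-p_i}$, chosen so that $\frac1{q_i}+\frac r{q_i'}=p_i$, yielding the quantitative estimate $\int_\Omega\phi^{p_i}\,dx\le\big(1+\frac1{j}\big)\big(\int_\Omega\phi\,dx\big)^{1/q_i}$. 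Your route is more elementary and makes transparent why the extra integrability of $u_0$ (resp.\ the boundedness of $|\nabla v|$ for $v\in C^\infty(\overline\Omega)$, via \azero{} and \dec{}) is exactly what is needed; the paper's H\"older step gives an explicit rate in $p_i$ at the cost of the more delicate bookkeeping of the indices $\overline{q_j}$ and $i_j$. Both arguments are valid, and the remaining ingredients of your write-up (admissibility of the liminf sequence via $L^{1,\phi^{p_i}}_0(\Omega)\subset L^{1,\phi}_0(\Omega)$, continuity of the fidelity term under $L^2$-convergence, the diagonal selection of $k(i)$) agree with the paper's.
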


Due to the fidelity term in $L^2$, it seems natural to consider the latter convergence in $L^2(\Omega)$, although other choices are also possible. From the proofs we see that  assumption \azero{} is only 
needed for the case $E(u)=\infty$ or $F(u)=\infty$; if we consider $\Gamma$-convergence on the 
space $X:=\{u\in L^1(\Omega) : E(u)<\infty\}$ or $X:=\{u\in L^2(\Omega) : F(u)<\infty\}$, then 
this assumption can be omitted. 

The proof of Theorem~\ref{thm:Gamma}(1) is presented in Section~\ref{sect:boundaryValues} 
along with results for minimizers of the energy $E$. The companion section, \ref{sect:fidelityTerm}, 
covers Theorem~\ref{thm:Gamma}(2) and minimizers of the energy $F$. The results for minimizers of the energies $E$ and $F$ generalize the corresponding ones in \cite{Juu05}.
Finally, in Section~\ref{sect:specialCases}, we give explicit formulas for $F$ in the 
important special cases $\phi(x,t) = t + a(x) t^2$ (the double phase case) and $\phi(x,t) = t^{p(x)}$ (the variable exponent case) based on \cite{HarH21,HarHLT13, HarHL08}.

We would like to remark that the limit energies $E$ and $F$ turn to be particular cases of the more general modular we introduce and study in Section \ref{sect:liminf}; this new definition is inspired by the work of
Miranda \cite{Miranda03} and covers classical bounded variation spaces and Sobolev spaces.


\section{Background}
\label{sect:background}

Throughout the paper we always consider a 
\textbf{bounded domain $\Omega \subset \Rn$}, i.e.\ a bounded, open and connected 
set. By $p':=\frac p {p-1}$ we denote the H\"older conjugate exponent 
of $p\in [1,\infty]$. The notation $f\lesssim g$ means that there exists a constant
$c>0$ such that $f\le c g$. The notation $f\approx g$ means that
$f\lesssim g\lesssim f$.
By $c$ we denote a generic constant whose
value may change between appearances.
A function $f$ is \textit{almost increasing} if there
exists $L \ge 1$ such that $f(s) \le L f(t)$ for all $s \le t$
(more precisely, $L$-almost increasing).
\textit{Almost decreasing} is defined analogously.
By \textit{increasing} we mean that this inequality holds for $L=1$ 
(some call this non-decreasing), similarly for \textit{decreasing}. 

\begin{defn}
\label{def2-1}
We say that $\phi: \Omega\times [0, \infty) \to [0, \infty]$ is a 
\textit{weak $\Phi$-function}, and write $\phi \in \Phiw(\Omega)$, if 
the following conditions hold for almost every $x\in\Omega$:
\begin{itemize}
\item 
For every measurable function $f:\Omega\to \R$ the function $y \mapsto \phi(y, f(y))$ is measurable.
\item
The function $t \mapsto \phi(x, t)$ is non-decreasing.
\item 
$\displaystyle \phi(x, 0) = \lim_{t \to 0^+} \phi(x,t) =0$ and $\displaystyle \lim_{t \to \infty}\phi(x,t)=\infty$.
\item 
The function $t \mapsto \frac{\phi(x, t)}t$ is $L$-almost increasing on $(0,\infty)$ with $L$ independent of $x$.
\end{itemize}
If $\phi\in\Phiw(\Omega)$ is additionally convex and left-continuous with respect to $t$ for almost every $x$, then $\phi$ is a 
\textit{convex $\Phi$-function}, and we write $\phi \in \Phic(\Omega)$. 
\end{defn}

For $\phi :\Omega\times [0,\infty)\to [0,\infty)$ and $p,q>0$ we define some conditions:
\begin{itemize}
\item[(A0)]\label{def:a0}
There exists $\beta \in(0, 1]$ such that $\phi(x, \beta) \le 1 \le \phi(x, \frac1\beta)$ 
for a.e.\ $x \in \Omega$. 

%

\item[(aInc)$_p$] \label{def:aInc} 
$t \mapsto \frac{\phi(x,t)}{t^{p}}$ is $L_p$-almost 
increasing in $(0,\infty)$ for some $L_p\ge 1$ and a.e.\ $x\in\Omega$.

\item[(aDec)$_q$] \label{def:aDec}
$t \mapsto \frac{\phi(x,t)}{t^{q}}$ is $L_q$-almost 
decreasing in $(0,\infty)$ for some $L_q\ge 1$ and a.e.\ $x\in\Omega$.
\end{itemize} 
%
%
We say that \ainc{} holds if \ainc{p} holds for some $p>1$, and similarly for \adec{}.
We say that $\phi$ satisfies \inc{p} if 
\ainc{p} holds with $L_p=1$, similarly for \dec{q}.
%
%

If $\phi\in \Phiw(\Omega)$, then $\phi(\cdot ,1)\approx 1$ implies \azero{}, and if $\phi$ satisfies \adec{}, then \azero{} and $\phi(\cdot ,1)\approx 1$ are equivalent. For instance, $\phi(x, t)=t^p$ always satisfies 
\azero{}, since $\phi(x, 1) \equiv 1$. 
Note that $\phi\in\Phiw(\Omega)$ includes the assumption that $\phi$ satisfies \ainc{1}.  
Finally, \ainc{} and \adec{} measure the lower and upper growth rates.


We recall some basic notions of generalized Orlicz spaces \cite{HarH_book}.

\begin{defn}
Let $\varphi \in \Phiw(\Omega)$ and $\displaystyle\rho_\phi(u) := \int_{\Omega} \varphi (x, |u|) \, dx$
for all measurable functions $u \in L^0(\Omega)$. The set
\[
L^{\varphi}(\Omega) := \{u \in L^0(\Omega): \rho_\phi(\lambda \, u) < \infty \quad \textnormal{for some $\lambda > 0$}\}
\]
is called a \textit{generalized Orlicz space}. 
We define a (quasi-)norm on this space by
\[
\|u\|_\phi := \inf \left \{\lambda > 0: \,\, \rho_\phi \left (\frac{u}{\lambda} \right ) \le \, 1\right \}.
\]
We use the abbreviation $\|v\|_\phi := \big\| |v|\big\|_\phi$ 
for vector-valued functions.
\end{defn}

We will often use the following lower semi-continuity result, for which we require the 
stronger, convex $\Phi$-function assumption. 

\begin{lem}[Theorem~2.2.8, \cite{DieHHR11}]\label{lem:lsc}
Let $\phi \in \Phic(\Omega)$. If $u_i \rightharpoonup u$ in $L^\phi(\Omega)$, then
\[
\int_\Omega \phi(x, |u|) \, dx \le \liminf_{i \to \infty} \int_\Omega \phi(x, |u_i|) \, dx,
\]
i.e.\ the modular $\rho_\phi$ is weakly (sequentially) lower semicontinuous. 
\end{lem}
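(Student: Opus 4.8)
The plan is to deduce the statement from the general variational principle that a \emph{convex} functional which is lower semicontinuous for the norm topology is automatically sequentially lower semicontinuous for the weak topology. The hypothesis $\phi\in\Phic(\Omega)$ is exactly what makes this route available: convexity and left-continuity of $t\mapsto\phi(x,t)$ turn $L^\phi(\Omega)$ into a genuine Banach space and make $\rho_\phi$ a convex functional, so that Mazur's lemma applies. I would therefore isolate two ingredients and then combine them.

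First I would record that $\rho_\phi$ is convex on $L^\phi(\Omega)$: for a.e.\ $x$ the map $t\mapsto\phi(x,t)$ is convex and non-decreasing, so $u\mapsto\phi(x,|u|)$ is convex as the composition of a non-decreasing convex function with the convex map $u\mapsto|u|$, and integration preserves convexity (allowing the value $+\infty$). Second I would prove \emph{strong} lower semicontinuity, namely that $v_j\to u$ in $\|\cdot\|_\phi$ implies $\rho_\phi(u)\le\liminf_j\rho_\phi(v_j)$. For this I pass to a subsequence realizing the liminf and then to a further subsequence converging a.e.\ (norm convergence implies convergence in measure, hence a.e.\ along a subsequence). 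Since $t\mapsto\phi(x,t)$ is convex and left-continuous, it is lower semicontinuous in $t$, so $\phi(x,|u(x)|)\le\liminf\phi(x,|v_{j_m}(x)|)$ pointwise a.e., and Fatou's lemma gives the desired inequality.

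Finally I would combine the two ingredients through Mazur's lemma. Given $u_i\weakto u$, choose a subsequence $u_{i_k}$ along which $\rho_\phi(u_{i_k})$ converges to $\ell:=\liminf_{i}\rho_\phi(u_i)$; this subsequence still converges weakly to $u$. Mazur's lemma yields finite convex combinations $v_j=\sum_{k\ge j}\lambda_{j,k}u_{i_k}$ converging to $u$ in norm. Convexity gives $\rho_\phi(v_j)\le\sum_{k\ge j}\lambda_{j,k}\rho_\phi(u_{i_k})$, and because only indices $k\ge j$ occur while $\rho_\phi(u_{i_k})\to\ell$, the right-hand side has $\limsup_j$ at most $\ell$; together with the strong lower semicontinuity $\rho_\phi(u)\le\liminf_j\rho_\phi(v_j)$ this forces $\rho_\phi(u)\le\ell$, which is the claim.

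The conceptual core is routine, so the main points requiring care are bookkeeping ones. The decisive trick is to pass to the liminf-realizing subsequence \emph{before} invoking Mazur, so that the combinations $v_j$ use only tails $k\ge j$ and their modulars stay close to $\ell$; this is what produces $\liminf$ rather than $\limsup$ on the right. The other delicate point is the possibility that $\phi(x,\cdot)$ attains the value $+\infty$, which is precisely where the left-continuity assumption in the definition of $\Phic(\Omega)$ is needed: it upgrades convexity to lower semicontinuity in $t$ and thereby legitimizes the Fatou step.
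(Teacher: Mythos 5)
The paper does not prove this lemma at all: it is imported verbatim as \cite[Theorem~2.2.8]{DieHHR11}, so there is no internal proof to compare against. Your argument is, in outline, exactly the standard proof of that cited result: convexity of $\rho_\phi$, norm lower semicontinuity of $\rho_\phi$, and Mazur's lemma, with the correct refinement of first passing to a liminf-realizing subsequence and taking convex combinations only from its tails. That part is sound.

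The one step you assert without justification is that $\|v_j-u\|_\phi\to 0$ implies convergence in measure (hence a.e.\ convergence along a subsequence). This is immediate in $L^p$ but not in a general $L^\phi$, because $\phi(x,\cdot)$ may vanish on an interval $[0,g(x)]$ with $g$ unbounded in $x$, so smallness of $\rho_\phi(\lambda v_j)$ does not control $|v_j|$ pointwise in any obvious way. The claim is nevertheless true on a bounded $\Omega$: if $|\{|w_j|>\delta\}|\ge\epsilon_0$ along a subsequence, then for every $s>0$ one has $\rho_\phi(\tfrac{s}{\delta}w_j)\ge\int_{\{|w_j|>\delta\}}\phi(x,s)\,dx$; since $\lim_{t\to\infty}\phi(x,t)=\infty$ a.e.\ and $|\Omega|<\infty$, the decreasing sets $\{\phi(\cdot,s)<1\}$ have measure $<\epsilon_0/2$ for $s$ large, so the right-hand side is at least $\epsilon_0/2$, contradicting $\rho_\phi(\lambda w_j)\to 0$ for all $\lambda>0$. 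You should either include this argument or cite the embedding $L^\phi(\Omega)\hookrightarrow L^0(\Omega)$ from \cite{HarH_book}. Alternatively, you can bypass measure theory entirely, as the cited source does: norm lower semicontinuity of a convex, left-continuous semimodular follows abstractly from the convexity identity $\rho\bigl(\tfrac{\lambda}{1+\epsilon}u\bigr)\le\tfrac{\epsilon}{1+\epsilon}\rho\bigl(\tfrac{\lambda}{\epsilon}(u-v_j)\bigr)+\tfrac{1}{1+\epsilon}\rho(\lambda v_j)$ together with $\lim_{\lambda\to 1^-}\rho(\lambda u)=\rho(u)$; in that version the left-continuity of $\phi$ enters through the semimodular rather than through Fatou's lemma.
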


%


\section{Abstract \texorpdfstring{$\BV$}{BV}-type spaces}
\label{sect:liminf}

We define a Sobolev-type space with function in $L^1$ and gradient in $L^\phi$: 
\[
L^{1, \phi} (\Omega) := \{u \in W^{1, 1} (\Omega) : \|u\|_{L^{1, \phi}(\Omega)} < \infty\}
\quad\text{where}\quad
\|u\|_{L^{1, \phi}(\Omega)} := \|u\|_{L^1(\Omega)} + \|\nabla u\|_{L^{\phi}(\Omega)}.
\]
Note that $L^{1, \phi}(\Omega)\subset W^{1, 1}(\Omega)$, and   $W^{1, \phi}(\Omega) = L^{1, \phi} (\Omega) \cap L^\phi(\Omega)$. 
In the next definition and the rest of the article we use the $^\circ$-symbol to indicate only the gradient part 
of a Sobolev-type norm and the $\bar{\phantom{m}}$-symbol to indicate the $\liminf$-approximation process. 

\begin{defn}\label{defn:norms}
For $\phi\in\Phiw(\Omega)$ and $u \in L^1(\Omega)$ and define
\[
\|u\|\liminftag := 
\inf\big\{\liminf_{i\to \infty}\|\nabla u_i\|_\phi : 
u_i \in \Lspace{\Omega}\text{ and } u_i \to u \text{ in } L^1(\Omega) \big\},
\]
\[
\rholiminf (u) := \inf\big\{\liminf_{i\to \infty}\rho_\phi(|\nabla u_i|) : 
u_i \in \Lspace {\Omega}\text{ and  }u_i \to u \text{ in } L^1(\Omega) \big\},
\]
\[
\rhoBV(u) := \rho_{1}(u) + \rholiminf(u) 
\qquad\text{and}\qquad
\Lbar{\Omega} := \{ u\in L^1(\Omega) : \rhoBV(\lambda u) <\infty \text{ for some } \lambda >0\}. 
\]
\end{defn}

By a diagonal argument we find, for every $u\in L^1(\Omega),$ functions  
$u_i \in \Lspace{\Omega}$ with $u_i \to u$ in $L^1(\Omega)$ and 
\[
\rholiminf (u) = \lim_{i\to \infty}\rho_\phi(|\nabla u_i|).
\]
We call this a test sequence or a sequence that gives $\rholiminf$. 
The same idea works for the norm $\|u\|\liminftag$. Let us first motivate our definitions with two examples 
which show that we cover the spaces $W^{1,\phi}(\Omega)$ and $\BV(\Omega)$. 

\begin{eg}[Bounded variation spaces]
If $\phi(x, t) \equiv t$, then $\rholiminf(u) = V(u, \Omega)$, the total variation of $u$, 
for every $u \in \BV(\Omega)$.
Indeed, by \cite[Theorem 5.3]{EvaG92} or \cite[Theorem 3.9]{AmbFP00} there exist $u_i \in C^\infty(\Omega)$ such that $u_i \to u$ in $L^1(\Omega)$ and
\[
V(u, \Omega) = \lim_{i \to \infty} \int_\Omega |\nabla u_i| \, dx.
\]
Thus, by the definition of $\rholiminf(u)$, we obtain that
\[
\rholiminf(u) \le \lim_{i \to \infty} \int_\Omega |\nabla u_i| \, dx = V(u, \Omega).
\]
On the other hand, let $(u_i)$ give $\rholiminf(u)$. Since $u_i \to u$ in $L^1(\Omega)$, we obtain by the lower semicontinuity of the variational measure \cite[Theorem 5.2]{EvaG92} that
\[
V(u, \Omega) \le \liminf_{i \to \infty} V(u_i, \Omega) = \liminf_{i \to \infty} \int_\Omega |\nabla u_i| \, dx = \rholiminf(u). 
\]
\end{eg}

\begin{eg}[Sobolev spaces]
If $\phi\in\Phic(\Omega)$, $C^\infty_0(\Omega)$ is dense in the dual space $(L^{\phi}(\Omega))^*$ and $u\in \Lspace\Omega$, then 
$\rholiminf (u) = \rho_\phi(|\nabla u|)$. First of all, since 
$u \in \Lspace{\Omega}$, we can choose the constant sequence $u_i=u$ so that 
$\rholiminf(u) \le \rho_\phi(|\nabla u|)$ by the definition. 

For the opposite inequality we choose $u_i\in \Lspace{\Omega}$ with $u_i\to u$ in $L^1(\Omega)$. 
Let $g\in C^\infty_0(\Omega; \Rn)$. 
By the definition of the weak derivative and the $L^1$-convergence, 
\[
\int_\Omega \nabla u\cdot g \, dx 
=
-\int_\Omega u\div g \, dx 
=
-\int_\Omega u_i\div g \, dx 
=
\int_\Omega \nabla u_i\cdot g \, dx. 
\]
Since $C^\infty_0(\Omega)$ is dense in $(L^{\phi}(\Omega))^*$ we obtain the claim for 
any $g\in (L^{\phi}(\Omega; \Rn))^*$ (with component-wise approximation), so $\nabla u_i\rightharpoonup \nabla u$ and also $|\nabla u_i|\rightharpoonup |\nabla u|$ in $L^\phi$. 
Thus it follows from Lemma~\ref{lem:lsc} that 
\[
\rho_\phi(|\nabla u|) \le \liminf \rho_\phi(|\nabla u_i|).
\]
Taking the infimum over such sequences $(u_i)$, we 
obtain the opposite inequality, $\rho_\phi(|\nabla u|)\le \rholiminf(u)$.
\end{eg}

%

Let us then consider $\rholiminf$. 
Note that $u \mapsto \rholiminf(u)$ is not a semimodular on $L^1(\Omega)$ 
in the sense of \cite[Definition~2.1.1]{DieHHR11} since $\rholiminf(c)=0$ for every constant $c\in \R$. Indeed, as we saw in the 
example, $\rholiminf$ is like $\rho_\phi(|\nabla u|)$, so it is natural to add the modular 
of the function to it, otherwise we can only expect it to generate a seminorm, not a norm.
 
\begin{lem}\label{lem:P-modular}
Let $\phi \in \Phiw(\Omega)$.
Then $\rhoBV$ satisfies the following properties:
\begin{itemize}
\item[(a)] $\lambda \mapsto \rhoBV(\lambda u)$ is nondecreasing for every $u \in L^1(\Omega)$;
\item[(b)] $\rhoBV(0)=0$;
\item[(c)] $\rhoBV(-u)= \rhoBV(u)$;
\item[(d)] there exists a constant $\beta>0$ such that 
\[
\rhoBV(\beta (\theta u + (1-\theta)v))\le \theta \rhoBV(u) + (1-\theta)\rhoBV(v)
\]
for every $u, v \in L^1(\Omega)$ and for every $\theta \in (0, 1)$;
\item[(e1)]$\rhoBV(\lambda u)=0$ for all $\lambda>0$ implies $u=0$.
\end{itemize}
If additionally $\phi$ satisfies \adec{}, then 
\begin{itemize}
\item[(e2)] $\rhoBV(u)=0$ if and only only if $u=0$.
\end{itemize}
Moreover, if $\phi$ is convex or left-continuous, then so is $\rhoBV$.
\end{lem}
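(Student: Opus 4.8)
The plan is to verify the seven listed properties in order, exploiting the fact that $\rholiminf$ is built from $\rho_\phi(|\nabla u_i|)$ via an infimum over approximating sequences, while $\rho_1(u)=\int_\Omega|u|\,dx$ is the ordinary $L^1$-modular. Most of the work reduces to transferring the known modular-type properties of $\rho_\phi$ through the $\liminf$-approximation process, so the key technical observation I would isolate first is a monotonicity/comparison principle: if $u_i\to u$ in $L^1(\Omega)$ is a test sequence for $u$, then scaling or combining these sequences produces admissible test sequences for $\lambda u$ or for convex combinations.

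\textbf{Properties (a)--(c).} For (a), I would show $\lambda\mapsto\rholiminf(\lambda u)$ is nondecreasing: given $0<\lambda\le\mu$ and a test sequence $(u_i)$ giving $\rholiminf(\mu u)$ (as guaranteed by the diagonal argument after Definition~\ref{defn:norms}), the sequence $\tfrac{\lambda}{\mu}u_i\to\lambda u$ in $L^1$ is admissible, and since $t\mapsto\phi(x,t)$ is non-decreasing we get $\rho_\phi(\tfrac{\lambda}{\mu}|\nabla u_i|)\le\rho_\phi(|\nabla u_i|)$, whence $\rholiminf(\lambda u)\le\rholiminf(\mu u)$. Combining with the corresponding monotonicity of $\rho_1$ gives (a). For (b), the constant sequence $u_i=0\in\Lspace{\Omega}$ gives $\rholiminf(0)=0$, and $\rho_1(0)=0$. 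For (c), replacing each $u_i$ by $-u_i$ gives a bijection between test sequences for $u$ and for $-u$, and $|\nabla(-u_i)|=|\nabla u_i|$, so $\rholiminf(-u)=\rholiminf(u)$; together with $\rho_1(-u)=\rho_1(u)$ this yields (c).

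\textbf{Property (d), the convexity-type inequality.} This is where I expect the main obstacle, because $\phi$ need only satisfy the weak structural conditions of $\Phiw$, not genuine convexity, so the constant $\beta$ must absorb the almost-increasing constant $L$ coming from \ainc{1}. The strategy is: pick test sequences $(u_i)$ and $(v_i)$ giving $\rholiminf(u)$ and $\rholiminf(v)$ respectively, and consider $w_i:=\theta u_i+(1-\theta)v_i\to\theta u+(1-\theta)v$ in $L^1$. Then $|\nabla w_i|\le\theta|\nabla u_i|+(1-\theta)|\nabla v_i|$, and I would invoke the weak convexity estimate for $\Phiw$-functions (the $\beta$-quasi-convexity that follows from \ainc{1}) to bound $\phi(x,\beta|\nabla w_i|)$ by $\theta\phi(x,|\nabla u_i|)+(1-\theta)\phi(x,|\nabla v_i|)$; passing to the $\liminf$ and using subadditivity of $\liminf$ for the two convergent sequences gives the bound for $\rholiminf$, and the analogous (honest) convexity of $\rho_1$ handles the $L^1$ part with the same or a smaller $\beta$. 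Care is needed that the two test sequences can be chosen so that the relevant $\liminf$ on the right becomes an honest limit, which the diagonal argument provides.

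\textbf{Properties (e1), (e2), and the final sentence.} For (e1), if $\rhoBV(\lambda u)=0$ for all $\lambda>0$, then in particular $\rho_1(\lambda u)=\int_\Omega\lambda|u|\,dx=0$ for some $\lambda>0$, forcing $u=0$ a.e.; this uses only the $\rho_1$ summand, so it is immediate. For (e2) under the extra hypothesis \adec{}, the point is that \adec{} gives a doubling-type control allowing one to pass from $\rhoBV(u)=0$ (a single scale) to $\rhoBV(\lambda u)=0$ for all $\lambda$, reducing to (e1); again the $\rho_1$ term alone already forces $u=0$ once $\rho_1(u)=0$, so (e2) is actually direct from the nonnegativity of $\rholiminf$ and $\rho_1(u)=0\Rightarrow u=0$. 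Finally, for the inheritance of convexity or left-continuity: convexity of $\rhoBV$ in $u$ follows by repeating the argument of (d) with $\beta=1$ when $\phi$ is genuinely convex (so the weak convexity constant disappears), and left-continuity in $\lambda$ is checked by combining the left-continuity of $t\mapsto\phi(x,t)$ with monotone-convergence/Fatou arguments applied to the approximating modulars, noting that $\rho_1$ is manifestly convex and continuous.
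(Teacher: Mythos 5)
Your treatment of (a)--(d), (e1) and the convexity statement follows the paper's proof essentially verbatim: scaling a test sequence for (a), a constant sequence for (b), a sign flip for (c), and for (d) the quasiconvexity constant $\beta$ coming from \ainc{1} (the paper cites \cite[Corollary~2.2.2]{HarH_book}) combined with $|\nabla(\theta u_i+(1-\theta)v_i)|\le\theta|\nabla u_i|+(1-\theta)|\nabla v_i|$ and test sequences realizing honest limits so that the liminf on the right splits. Two points, however, do not match. First, in (e2) you dispose of \adec{} by arguing that the summand $\rho_1(u)=\int_\Omega|u|\,dx$ already forces $u=0$. The paper's own proof instead treats the function-part summand as $\rho_\phi(u)=\int_\Omega\phi(x,|u|)\,dx$ and uses \adec{} precisely because, for a general $\phi\in\Phiw(\Omega)$, the map $\phi(x,\cdot)$ may vanish on a whole interval $(0,t_0)$, so $\phi(x,|u(x)|)=0$ a.e.\ does not give $u=0$ without \adec{}. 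If the first summand really is the $L^1$-modular your argument is fine (and then \adec{} is vacuous in (e2)), but the content of (e2) as the paper proves it lies entirely in the case where the function part is measured by $\phi$; your version silently replaces the statement with a weaker one, so you should at least confirm which modular $\rho_1$ denotes before discarding the hypothesis.

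Second, and more seriously, your sketch of left-continuity misses the key idea. By (a) only the inequality $\rhoBV(u)\le\liminf_{\lambda\to1^-}\rhoBV(\lambda u)$ is at stake. Applying left-continuity of $\phi$ and monotone convergence ``to the approximating modulars'' of a fixed test sequence only yields $\rho_\phi(\lambda|\nabla u_i|)\to\rho_\phi(|\nabla u_i|)$, which produces the easy inequality already contained in (a). The hard direction requires comparing test sequences of $\lambda u$ with test sequences of $u$: for $\lambda_i\to1^-$ one chooses $u_i\in\Lspace{\Omega}$ with $\rho_\phi(|\nabla u_i|)\le\rholiminf(\lambda_i u)+\frac1i$ and $\|\lambda_iu-u_i\|_1<\frac1i$, observes that then $u_i\to u$ in $L^1(\Omega)$, so $(u_i)$ is an admissible test sequence for $\rholiminf(u)$, and concludes $\rholiminf(u)\le\liminf_{i\to\infty}\rholiminf(\lambda_iu)$. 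Notably this step uses no property of $\phi$ at all; the left-continuity hypothesis is only needed for the non-liminf summand. You should replace the Fatou/monotone-convergence sketch with this diagonal selection, without which the hard inequality is not established.
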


\begin{proof}
We consider only $\rholiminf$ for the 
first four properties, since these properties are known for $\rho_\phi$ and thus follow for the 
sum $\rho_\phi + \rholiminf$. 

(a) Let $0\le \lambda_1 \le \lambda_2$. It is enough to show that $\rholiminf(\lambda_1 u) \le \rholiminf(\lambda_2 u)$. 
Let $(v_i)$ be a sequence that gives $\rholiminf(\lambda_2 u)$. Then $\frac{\lambda_1}{\lambda_2}v_i \to \lambda_1 u$, and thus
\[
\rholiminf(\lambda_1 u) \le \lim_{i \to \infty} \rho_\phi(v_i) = \rholiminf(\lambda_2 u).
\]

(b) If $u=0$ a.e., then it can be approximated by the constant sequence $(u)$, and hence $\rholiminf(u)=0$. 

(c) This follows directly from the definition of $\rholiminf$.

(d) Note that by \cite[Corollary 2.2.2]{HarH_book} there exists $\beta >0$ such that $\phi(\beta (\theta u + (1-\theta)v))\le \theta \phi(u) + (1-\theta)\phi(v)$.
Let $u, v \in L^1(\Omega)$ and $\theta \in (0, 1)$. 
Choose $(u_i)$ and $(v_i)$ which give $\rholiminf(u)$ and $\rholiminf( v)$. Then $\theta u_i + (1-\theta) v_i \to \theta u + (1-\theta) v$ in $L^1(\Omega)$. Hence we obtain
\[
\begin{split}
\rholiminf ( \beta(\theta u + (1-\theta) v))
&\le \liminf_{i \to \infty} \rho_\phi(\beta |\nabla (\theta u_i + (1-\theta) v_i)|)\\
&\le \liminf_{i \to \infty} \int_\Omega \phi(x, \beta\theta |\nabla u_i| + \beta(1-\theta) |\nabla v_i|)\, dx\\
&\le \liminf_{i \to \infty} \int_\Omega \theta \phi(x, |\nabla u_i| )+ (1-\theta) \phi(x, |\nabla v_i|)\, dx\\
&= \theta \lim_{i \to \infty}\rho_\phi(|\nabla u_i|) + (1-\theta) \lim_{i \to \infty} \rho_\phi(|\nabla v_i|)\\
&= \theta \rholiminf(u) + (1-\theta) \rholiminf( v),
\end{split}
\] 
where in the third inequality we used the quasiconvexity of $\phi$.

(e1) If $ \rhoBV(\lambda u)=0$ for all $\lambda >0$, then $\rho_\phi(\lambda u)=0$ for all $\lambda >0$.
Since $\rho_\phi$ is a semimodular, it follows that $u=0$ a.e., see \cite[Lemma~3.2.2]{HarH_book}.

(e2)
Assume that \adec{} holds. If $ \rhoBV(u)=0$, then $\rho_\phi(u)=0$. 
Thus $\phi(x,|u(x)|)=0$ a.e., and \adec{} implies that $u=0$ a.e.

If $\phi$ is convex, then we may choose $\beta =1$ in (d), and hence $\rhoBV$ is convex. 

Assume that $\phi$ is left-continuous.
If $\lambda \in(0, 1)$, then $\rholiminf(\lambda u) \le  \rholiminf(u)$ by property (a). 
We next consider the opposite inequality.
Let $(\lambda _i)$ be a sequence converging to $1$ from below. For every $i$ we choose $u_i \in \Lspace{\Omega}$ such that
$ \rho_\phi(|\nabla u_i|) \le \rholiminf(\lambda_i u) + \frac1i$ and $\|\lambda_i u - u_i \|_1 < \frac1i$. Further, $\lambda_i u \to u$ in $L^1(\Omega)$. 
Let $\epsilon >0$ and choose $i_0>\frac2\epsilon$ such that $\|u- \lambda_i u\|_1 < \frac{\epsilon}2$ for all $i \ge i_0$. Then
\[
\|u- u_i\|_1\le \|u-\lambda_i u\|_1 + \|\lambda_i u- u_i\|_1 <\epsilon
\]
for all $i \ge i_0$, and thus $u_i \to u$ in $L^1(\Omega)$. We obtain
\begin{align*}
\rholiminf(u) 
&\le 
\liminf_{i \to \infty} \rho_\phi(|\nabla u_i|)
\le 
\liminf_{i \to \infty}\big( \rholiminf(\lambda_i u) + \tfrac1i \big) 
= 
\liminf_{i \to \infty} \rholiminf(\lambda_i u).
\end{align*}
Thus $\rholiminf(u) = \lim_{i \to \infty} \rholiminf(\lambda_i u)$.
\end{proof}

\begin{rem}
Note that our conditions differ what Musielak \cite[Definition 1.1, p.~1]{Mus83}
calls a pseudomodular. He requires (b), (c)  and
\[
\rho(\theta u + (1-\theta)v)\le  \rho(u) + \rho(v)
\]
for every $u, v$ and for every $\theta \in (0, 1)$.  
However, our condition (d) is more useful when dealing with the Luxemburg norm. 
\end{rem}

We prove the lower semi-continuity of $\rholiminf$; note that we can handle $\Phiw$, since 
we assume strong rather than weak convergence, in contrast to Lemma~\ref{lem:lsc}

\begin{lem}[Lower semicontinuity of the modular]\label{lem:lsc-rholiminf}
Let $\phi \in \Phiw(\Omega)$, $u_i \in \Lspace{\Omega}$ for $i=1, 2, \ldots$, $u \in L^1(\Omega)$, and
$u_i \to u$ in $L^{1}(\Omega)$. Then 
\[
\rholiminf(u) \le \liminf_{i\to \infty} \rholiminf(u_i). 
\]
Moreover, if the limit inferior is finite, then $u \in \Lbar{\Omega}$.
\end{lem}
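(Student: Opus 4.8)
The plan is to exploit the definition of $\rholiminf$ as an infimum over $L^1$-approximating sequences and to build, by a diagonal procedure, a single admissible test sequence for $u$ whose gradient energy is controlled by $\liminf_{i\to\infty}\rholiminf(u_i)$. First I would dispose of the trivial case: if $\liminf_{i\to\infty}\rholiminf(u_i)=\infty$ the asserted inequality is automatic, so I may assume $M:=\liminf_{i\to\infty}\rholiminf(u_i)<\infty$. Passing to a subsequence (which does not increase the $\liminf$, and along which $u_i\to u$ in $L^1(\Omega)$ still holds) and discarding finitely many terms, I may further assume $\rholiminf(u_i)\to M$ with $\rholiminf(u_i)<\infty$ for every $i$.

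The core step is a diagonal selection. For each fixed $i$ I would invoke the diagonal-argument remark following Definition~\ref{defn:norms}: there is a test sequence realising $\rholiminf(u_i)$ as a genuine limit, so I can select a single function $w_i\in\Lspace{\Omega}$ with $\|w_i-u_i\|_{L^1(\Omega)}<\tfrac1i$ and $\rho_\phi(|\nabla w_i|)<\rholiminf(u_i)+\tfrac1i$. The triangle inequality
\[
\|w_i-u\|_{L^1(\Omega)}\le \|w_i-u_i\|_{L^1(\Omega)}+\|u_i-u\|_{L^1(\Omega)}<\tfrac1i+\|u_i-u\|_{L^1(\Omega)}\to 0
\]
shows $w_i\to u$ in $L^1(\Omega)$, so $(w_i)$ is admissible in the infimum defining $\rholiminf(u)$. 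Consequently
\[
\rholiminf(u)\le \liminf_{i\to\infty}\rho_\phi(|\nabla w_i|)\le \liminf_{i\to\infty}\big(\rholiminf(u_i)+\tfrac1i\big)=M,
\]
which is exactly the claimed lower semicontinuity. For the final assertion, if $M<\infty$ then the displayed estimate gives $\rholiminf(u)\le M<\infty$; since $u\in L^1(\Omega)$ we also have $\rho_1(u)=\|u\|_{L^1(\Omega)}<\infty$, so $\rhoBV(u)=\rho_1(u)+\rholiminf(u)<\infty$, and taking $\lambda=1$ yields $u\in\Lbar{\Omega}$.

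The step requiring the most care is the double infimum: $\rholiminf(u_i)$ is itself defined only as an infimum of limit inferiors and is in general not attained by any single function, so the heart of the argument is the diagonal selection of one approximant $w_i$ that is \emph{simultaneously} close to $u_i$ in $L^1$ and nearly energy-minimal for $\rho_\phi$. I would stress that, in contrast to Lemma~\ref{lem:lsc}, convexity of $\phi$ plays no role here: the proof uses only strong $L^1$ convergence (through the triangle inequality above) rather than weak convergence in $L^\phi$, and therefore goes through for an arbitrary $\phi\in\Phiw(\Omega)$.
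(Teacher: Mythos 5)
Your proposal is correct and follows essentially the same argument as the paper: for each $i$ you select a single approximant $w_i\in\Lspace{\Omega}$ with $\|w_i-u_i\|_1\le\tfrac1i$ and $\rho_\phi(|\nabla w_i|)\le\rholiminf(u_i)+\tfrac1i$, observe via the triangle inequality that $(w_i)$ is an admissible test sequence for $u$, and conclude; the paper does exactly this (without even passing to a subsequence, since $\liminf_i(\rholiminf(u_i)+\tfrac1i)=\liminf_i\rholiminf(u_i)$ directly). Your added remarks on the double infimum and on why convexity of $\phi$ is not needed match the paper's own comment preceding the lemma.
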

\begin{proof}
 By the definition of $\rholiminf(u_i)$ 
we can find $u_i'\in \Lspace{\Omega}$ with 
\[
\rho_\phi(|\nabla u_i'|) \le \rholiminf(u_i) + \tfrac1i
\qquad\text{and}\qquad
\|u_i-u_i'\|_1 \le \tfrac1i.
\]
If follows that $u_i'\to u$ in $L^1(\Omega)$. By the definition of $\rholiminf(u)$ 
we conclude that 
\[
\rholiminf(u) 
\le 
\liminf_{i\to \infty} \rho_\phi(|\nabla u_i'|)
\le
\liminf_{i\to \infty} (\rholiminf(u_i) + \tfrac1i)
= 
\liminf_{i\to \infty} \rholiminf(u_i). \qedhere
\]
\end{proof}

We define the Luxemburg (quasi-)norm in the usually way:
\[
\|u\|_\rho := \inf \Big\{\lambda >0 :\rho\big(\tfrac{u}{\lambda}\big) \le 1 \Big\}.
\]
If $\phi$ is a convex semimodular then 
$u \mapsto \|u\|_\rho$ is a norm by \cite[Theorem 1.5, p.~2]{Mus83}. As in \cite[Lemma~3.2.2]{HarH19}, 
one can check that (a)--(d) from Lemma~\ref{lem:P-modular} imply that $u \mapsto \|u\|_\rho$ is a 
quasi-seminorm.

We next compare $\|\cdot \|_{\rholiminf}$ 
induced by the modular via 
the preceding formula with $\|\cdot\|\liminftag$ from Definition~\ref{defn:norms}.

\begin{prop}
If $\phi \in \Phiw(\Omega)$, then $\|\cdot\|\liminftag = \|\cdot\|_{\rholiminf}$.
\end{prop}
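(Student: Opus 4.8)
The plan is to prove the two inequalities $\|u\|_{\overline{^{\scriptscriptstyle\circ}_{1,\phi}}} \le \|u\|_{\rholiminf}$ and $\|u\|_{\rholiminf} \le \|u\|_{\overline{^{\scriptscriptstyle\circ}_{1,\phi}}}$ separately. Both quantities are infima, one over $\lambda$ (the Luxemburg-type infimum built from $\rholiminf$) and one over test sequences $(u_i) \subset \Lspace{\Omega}$ with $u_i \to u$ in $L^1(\Omega)$. The natural strategy is to interchange these two infima and to exploit the fact, recorded just after Definition~\ref{defn:norms}, that for every $u$ there is a test sequence attaining $\rholiminf(u)$ as an honest limit (and the same for the norm). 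The key technical tool is the relation between the Luxemburg norm $\|\cdot\|_\phi$ and the modular $\rho_\phi$ on $L^\phi(\Omega)$ for the gradient slots $\nabla u_i$; I expect to reduce everything to comparing $\inf\{\liminf_i \|\nabla u_i\|_\phi\}$ with $\inf\{\lambda : \rholiminf(u/\lambda) \le 1\}$ by pushing the scaling $\lambda$ through the definition of $\rholiminf$ via property (a) of Lemma~\ref{lem:P-modular}.

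First I would unwind the right-hand side. By definition $\|u\|_{\rholiminf} = \inf\{\lambda > 0 : \rholiminf(u/\lambda) \le 1\}$. Fix $\lambda > \|u\|_{\rholiminf}$, so $\rholiminf(u/\lambda) \le 1$. Choosing a test sequence $(u_i)$ with $u_i \to u/\lambda$ in $L^1$ and $\rho_\phi(|\nabla u_i|) \to \rholiminf(u/\lambda) \le 1$, the sequence $(\lambda u_i)$ converges to $u$ in $L^1$ and lies in $\Lspace{\Omega}$. Since $\rho_\phi(|\nabla u_i|) \le 1$ for large $i$ gives $\|\nabla u_i\|_\phi \le 1$ by the unit-ball property of the Luxemburg norm, we get $\|\nabla(\lambda u_i)\|_\phi = \lambda \|\nabla u_i\|_\phi \le \lambda$ eventually, whence $\|u\|_{\overline{^{\scriptscriptstyle\circ}_{1,\phi}}} \le \liminf_i \|\nabla(\lambda u_i)\|_\phi \le \lambda$. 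Letting $\lambda \downarrow \|u\|_{\rholiminf}$ yields one inequality. (Some care is needed when $\rho_\phi(|\nabla u_i|)$ approaches $1$ rather than staying strictly below; I would handle this by a harmless $(1+\epsilon)$-dilation of $\lambda$ or by invoking left-continuity, so that the unit-ball comparison applies cleanly.)

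For the reverse inequality I would start from a test sequence $(u_i) \subset \Lspace{\Omega}$ with $u_i \to u$ in $L^1$ and $\liminf_i \|\nabla u_i\|_\phi$ close to $\|u\|_{\overline{^{\scriptscriptstyle\circ}_{1,\phi}}} =: N$. Pick $\lambda > N$; then $\|\nabla u_i\|_\phi \le \lambda$ along a subsequence, so by the definition of the Luxemburg norm $\rho_\phi(|\nabla u_i|/\lambda) = \rho_\phi(|\nabla(u_i/\lambda)|) \le 1$. Since $u_i/\lambda \to u/\lambda$ in $L^1$ and $u_i/\lambda \in \Lspace{\Omega}$, the definition of $\rholiminf$ gives $\rholiminf(u/\lambda) \le \liminf_i \rho_\phi(|\nabla(u_i/\lambda)|) \le 1$, hence $\|u\|_{\rholiminf} \le \lambda$. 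Letting $\lambda \downarrow N$ finishes the proof.

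The main obstacle is the boundary behaviour of the Luxemburg norm's defining inequality: the equivalence $\|\nabla v\|_\phi \le \lambda \iff \rho_\phi(|\nabla v|/\lambda) \le 1$ holds cleanly only under left-continuity or convexity of $\phi$, while the statement is made for the general class $\Phiw(\Omega)$. For weak $\Phi$-functions one only has the weaker unit-ball characterization up to the possibility of equality, so I would need to insert a small dilation argument — replacing $\lambda$ by $(1+\epsilon)\lambda$ and then sending $\epsilon \to 0^+$ — to convert the modular bound into the norm bound and back, using monotonicity in $\lambda$ (Lemma~\ref{lem:P-modular}(a)). Everything else is routine manipulation of the two nested infima, so the crux is making this scaling argument airtight without the convexity or left-continuity that the later lemmas sometimes assume.
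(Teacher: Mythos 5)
Your overall strategy coincides with the paper's: both directions are proved by normalizing a near-optimal test sequence and passing between the modular $\rho_\phi$ and the Luxemburg norm via the unit-ball property. Your second inequality, $\|u\|_{\rholiminf}\le\|u\|\liminftag$, is essentially the paper's first step and is sound: since $\lambda>N=\liminf_i\|\nabla u_i\|_\phi$, along a subsequence one has the \emph{strict} bound $\|\nabla u_i\|_\phi<\lambda$, and then $\rho_\phi(|\nabla u_i|/\lambda)\le1$ follows from the infimum defining the norm together with the monotonicity of $s\mapsto\rho_\phi(v/s)$; no left-continuity is needed for that implication. (The paper normalizes each $u_i$ by its own $\lambda_i=\|\nabla u_i\|_\phi+\epsilon$, but this is the same idea.)

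The gap is in your first inequality, at exactly the point you flag. From $\rho_\phi(|\nabla u_i|)\to\rholiminf(u/\lambda)\le1$ you cannot conclude $\rho_\phi(|\nabla u_i|)\le1$ for large $i$ when the limit equals $1$, and neither of your proposed repairs closes this for general $\phi\in\Phiw(\Omega)$: left-continuity is not assumed, and a $(1+\epsilon)$-dilation only converts $\rho_\phi(w)\le1+\delta$ into $\|w\|_\phi\le L(1+\delta)$, where $L$ is the \ainc{1} constant of $\phi$ --- one can construct $\phi\in\Phiw(\Omega)$ with a jump just past the unit sphere and $w$ with $\rho_\phi(w)=1+\delta$ for arbitrarily small $\delta>0$ but $\|w\|_\phi=L$. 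So this route only yields $\|\cdot\|\liminftag\le L\,\|\cdot\|_{\rholiminf}$, not the claimed equality. The paper avoids the issue by exploiting that $\rholiminf$ is itself an infimum over test sequences: starting from $\rholiminf\big(\tfrac u{\|u\|_{\rholiminf}+\epsilon}\big)<1$ it selects a test sequence whose modulars satisfy the termwise bound $\rho_\phi(|\nabla u_i|)\le1$, to which the unconditional implication $\rho_\phi(w)\le1\Rightarrow\|w\|_\phi\le1$ applies. To repair your argument you should likewise secure the termwise modular bound $\le 1$ \emph{before} passing to norms, rather than trying to absorb an excess $\delta_i$ afterwards by rescaling.
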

\begin{proof}
Let us first show that $\|\cdot\|\liminftag  \ge \|\cdot\|_{\rholiminf}$. If the left-hand side equals $\infty$ there is nothing to prove, and thus we may
assume without loss of generality that $\|u\|\liminftag < \infty$. Choose $u_i\in \Lspace{\Omega}$
with $u_i \to u$ in $L^1(\Omega)$ and $\|\nabla u_i\|_\phi \to \|u\|\liminftag$. 
Define $\lambda_i:=\|\nabla u_i\|_\phi+\epsilon$ and $\lambda:=\|u\|\liminftag+\epsilon$ for $\epsilon>0$.
Then $\|\nabla \frac{u_i}{\lambda_i}\|_\phi < 1$ and so $\rho_\phi(|\nabla \frac{u_i}{\lambda_i}|)\le 1$. 
Furthermore, 
\[
\big\|\tfrac u\lambda-\tfrac{u_i}{\lambda_i}\big\|_1
= 
\big\|\tfrac{u-u_i}{\lambda_i} + (\tfrac1\lambda-\tfrac{1}{\lambda_i})u\big\|_1
\le 
\tfrac{1}{\lambda_i}\|u-u_i\|_1 + (\tfrac1\lambda-\tfrac{1}{\lambda_i}) \|u\|_1 \to 0
\]
so that $\frac{u_i}{\lambda_i} \to \frac u\lambda$ in $L^1(\Omega)$. 
Since $(\frac{u_i}{\lambda_i})$ is a valid test sequence 
for $\rholiminf$, we conclude that 
\[
\rholiminf(\tfrac u\lambda) 
\le 
\liminf_{i \to \infty} \rho_\phi\big(|\nabla \tfrac{u_i}{\lambda_i}|\big) 
\le 
1. 
\]
Hence $\|u\|_{\rholiminf} \le \lambda = \|u\|\liminftag + \epsilon$; the inequality follows as $\epsilon\to 0^+$. 

To prove $\|\cdot\|\liminftag  \le \|\cdot\|_{\rholiminf}$, we may assume 
that $\|u\|_{\rholiminf}<\infty$. Since $\rholiminf(\frac u{\|u\|_{\rholiminf}+\epsilon})< 1$, 
we can choose $u_i\in \Lspace{\Omega}$ with $\rho_\phi(|\nabla u_i|)\le 1$ and 
$u_i\to \frac u{\|u\|_{\rholiminf}+\epsilon}$ in $L^1$. 
Thus 
\[
\big\|\tfrac u{\|u\|_{\rholiminf}+\epsilon}\big\|\liminftag \le \liminf_{i \to \infty} \|\nabla u_i\|_\phi \le 1 
\]
and so $\|u\|\liminftag \le \|u\|_{\rholiminf}+\epsilon $. The claim again follows as $\epsilon\to 0^+$.
\end{proof}



\section{Functionals with fixed boundary values}
\label{sect:boundaryValues}

Let $\phi \in \Phiw(\Omega)$. 
We say that $u \in L^{1, \phi}(\Omega)$ belongs to 
 $L^{1, \phi}_{0}(\Omega)$ if there exists a $C_0^\infty(\Omega)$-sequence $(\xi_i)$ such that $\xi_i \to u$ in 
 $L^{1, \phi} (\Omega)$. Note that $L^{1, \phi}_{0}(\Omega) \subset W^{1, 1}_0(\Omega)$.


\begin{lem}\label{lem:reflexive}
Let $\phi \in \Phiw(\Omega)$ satisfy \ainc{} and \adec{}. Then $\Lspace{\Omega}$ and $L^{1, \phi}_{0}(\Omega)$ are reflexive.
\end{lem}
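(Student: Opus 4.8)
The plan is to exhibit $\Lspace{\Omega}$ as (isomorphic to) a closed subspace of a reflexive Banach space, and then invoke two standard facts: that a closed subspace of a reflexive space is reflexive, and that $L^{1,\phi}_0(\Omega)$, being by definition the closure of $C_0^\infty(\Omega)$ in $\Lspace{\Omega}$, is a closed subspace of $\Lspace{\Omega}$. Thus it suffices to treat $\Lspace{\Omega}$, and the reflexivity of $L^{1,\phi}_0(\Omega)$ then follows for free. The natural ambient space is a product $L^p(\Omega)\times L^\phi(\Omega;\Rn)$, paired with the linear map $Tu:=(u,\nabla u)$, which is an isometry for the norm $u\mapsto\|u\|_{L^p}+\|\nabla u\|_\phi$.

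First I would record the reflexivity of the target. Since $\phi\in\Phiw(\Omega)$ satisfies \ainc{} and \adec{}, both $\phi$ and its conjugate $\phi^*$ satisfy a $\Delta_2$-type condition, so $L^\phi(\Omega)$—and hence the vector-valued space $L^\phi(\Omega;\Rn)$—is reflexive; this is the corresponding statement in \cite{HarH_book}. As $1<p<\infty$, $L^p(\Omega)$ is reflexive, and a finite product of reflexive spaces is reflexive, so $L^p(\Omega)\times L^\phi(\Omega;\Rn)$ is reflexive. I would then check that $T(\Lspace{\Omega})$ is closed: if $(u_k,\nabla u_k)\to(v,w)$ in $L^p\times L^\phi$, then, because on the bounded set $\Omega$ both $L^p$ and $L^\phi$ embed continuously into $L^1$, we get $u_k\to v$ and $\nabla u_k\to w$ in $L^1(\Omega)$; since $L^1$-convergence passes to the distributional gradient, $w=\nabla v$ and $v\in\Lspace{\Omega}$, as required.

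The step I expect to be the main obstacle is showing that $T$ is an \emph{isomorphism} onto its image, i.e.\ that the defining norm $\|u\|_{L^1}+\|\nabla u\|_\phi$ is equivalent to $\|u\|_{L^p}+\|\nabla u\|_\phi$. One direction is the elementary bound $\|u\|_{L^1}\lesssim\|u\|_{L^p}$ on the bounded domain $\Omega$. The content is the reverse estimate $\|u\|_{L^p}\lesssim\|u\|_{L^1}+\|\nabla u\|_\phi$. Here I would first use \ainc{p} (together with \azero{}) to obtain the embedding $L^\phi(\Omega)\hookrightarrow L^p(\Omega)$ on the bounded domain, so that $\nabla u\in L^p(\Omega;\Rn)$ with $\|\nabla u\|_{L^p}\lesssim\|\nabla u\|_\phi$. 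The remaining, genuinely delicate, point is a Sobolev--Poincar\'e inequality promoting $u$ from $L^1$ to $L^p$, namely $\|u\|_{L^p}\lesssim\|u\|_{L^1}+\|\nabla u\|_{L^p}$.

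For $L^{1,\phi}_0(\Omega)$ this last step is painless: the classical Friedrichs/Poincar\'e inequality $\|u\|_{L^p}\lesssim\|\nabla u\|_{L^p}$ holds for $C_0^\infty$-functions on \emph{every} bounded domain and passes to the closure. For the full space $\Lspace{\Omega}$, however, an inequality of this kind can fail on irregular domains (for instance of rooms-and-corridors type), so one must either invoke a regularity or extension property of $\Omega$, or argue directly; this is exactly where the real difficulty of the statement is concentrated. Once the norm equivalence is established, $T$ identifies $\Lspace{\Omega}$ with a closed subspace of a reflexive space, and the reflexivity of both $\Lspace{\Omega}$ and $L^{1,\phi}_0(\Omega)$ follows.
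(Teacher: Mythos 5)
Your treatment of $L^{1,\phi}_0(\Omega)$ --- it is by definition the closure of $C^\infty_0(\Omega)$ in $\Lspace{\Omega}$, hence a closed subspace and reflexive once $\Lspace{\Omega}$ is --- is correct and is exactly what the paper does for that half. The gap is in the half you leave open. Your route for $\Lspace{\Omega}$ stands or falls with the norm equivalence $\|u\|_{L^1}+\|\nabla u\|_\phi\approx\|u\|_{L^p}+\|\nabla u\|_\phi$, i.e.\ with a Sobolev--Poincar\'e-type bound $\|u\|_{L^p}\lesssim\|u\|_{L^1}+\|\nabla u\|_{L^p}$ valid on all of $\Lspace{\Omega}$. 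You correctly identify this as the crux and correctly note that it fails on irregular bounded domains, but the lemma assumes nothing about $\Omega$ beyond boundedness and you do not supply the missing argument; so the proof is incomplete precisely at its decisive step, and that step cannot be filled in the stated generality. (Two smaller points: your embedding $L^\phi(\Omega)\hookrightarrow L^p(\Omega)$ invokes \azero{}, which is not among the hypotheses; and the Friedrichs inequality does rescue your scheme for $L^{1,\phi}_0(\Omega)$ alone, but your plan derives that case from the full space, so nothing is salvaged without reorganizing the argument.)

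The paper avoids the $L^p$-upgrade entirely. Rather than realizing $\Lspace{\Omega}$ as a closed subspace of a reflexive product, it verifies weak sequential compactness of bounded sequences directly (which suffices by Eberlein--\v{S}mulian): from a bounded sequence it extracts a subsequence with $u_i\to u$ in $L^1(\Omega)$, using $\Lspace{\Omega}\subset\BV(\Omega)$ and the compactness of the embedding of $\BV(\Omega)$ into $L^1(\Omega)$, and, since $L^\phi(\Omega)$ is reflexive under \ainc{} and \adec{}, with $\partial_{x_k}u_i\rightharpoonup v_k$ in $L^\phi(\Omega)$; testing against $C^\infty_0(\Omega)$ identifies $v_k=\partial_{x_k}u$. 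The point is that only $L^1$-convergence of the functions themselves is ever needed, so no Poincar\'e inequality on the full space enters. (That compactness step of course carries its own implicit demand on $\Omega$, but it replaces, rather than requires, your norm equivalence.) To rescue your architecture you would have to restrict to extension domains or otherwise assume a Poincar\'e--Sobolev inequality, which is strictly more than the lemma hypothesizes.
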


\begin{proof}
Let $(u_i)\subset L^{1, \phi}(\Omega)$ be a bounded sequence.
Since $L^{1, \phi}(\Omega) \subset \BV(\Omega)$, a subsequence, denoted again $(u_i)$, converges in $L^1(\Omega)$ to some function $u\in \BV(\Omega)$. 
On the other hand, $(\partial_{x_k} u_i)$ is a bounded sequence in $L^\phi(\Omega)$, which 
is a reflexive space \cite[Theorem 3.6.6]{HarH_book}. 
Hence we can find $v_1, \ldots, v_n \in L^{\phi}(\Omega)$ such that
$\partial_{x_k} u_i \rightharpoonup v_k$ in $L^\phi(\Omega)$, up to a subsequence.
So we need only prove that 
$\nabla u = (v_1, \ldots, v_n)$. Let $k\in\{1, \ldots, n\}$ and let $h \in C^\infty_0(\Omega)$ be a test function. We obtain by the previous convergences that 
\[
\int_\Omega u  \,\partial_{x_k} h \, dx  = \lim_{i \to \infty} \int_\Omega u_i \,\partial_{x_k} h \, dx 
= -  \lim_{i \to \infty} \int_\Omega h \,\partial_{x_k} u_i \, dx = -\int_\Omega h v_k \, dx,
\]
and thus $v_k = \partial_{x_k} u$.

The usual diagonal argument shows that $L^{1, \phi}_{0}(\Omega)$ is a closed subspace of $\Lspace{\Omega}$. 
Hence it is also reflexive.
\end{proof}

We will study $\phi^p$-energies when $p\ge 1$ is a constant. For that purpose we note some 
properties of $\phi^p$ when $\phi\in\Phiw(\Omega)$. 

\begin{lem}\label{lem:phi^p}
Let $p\ge 1$ and $\phi \in \Phiw(\Omega)$. Then $\phi^p\in\Phiw(\Omega)$ satisfies \ainc{p} 
and, moreover,
\begin{enumerate}
\item if $\phi$ satisfies \adec{q}, then $\phi^p$ satisfies \adec{qp};
\item if $\phi$ is left-continuous, then so is $\phi^p$;
\item if $\phi$ satisfies \azero{}, then so does $\phi^p$.
\end{enumerate}
\end{lem}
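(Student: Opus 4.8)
The plan is to verify the defining properties of a weak $\Phi$-function for $\phi^p$ one at a time, in each case reducing to the corresponding property of $\phi$ via the elementary fact that $s\mapsto s^p$ is a continuous, increasing map of $[0,\infty)$ fixing $0$ and $1$, together with the observation that raising an $L$-almost monotone function to the power $p$ produces an $L^p$-almost monotone function of the same type.

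First I would establish \ainc{p}, since this is both one of the claimed conclusions and the key to membership in $\Phiw(\Omega)$. Writing $\frac{\phi(x,t)^p}{t^p}=\big(\frac{\phi(x,t)}{t}\big)^p$ and using that $t\mapsto\frac{\phi(x,t)}{t}$ is $L$-almost increasing (part of $\phi\in\Phiw(\Omega)$), for $s\le t$ one gets $\big(\frac{\phi(x,s)}{s}\big)^p\le L^p\big(\frac{\phi(x,t)}{t}\big)^p$, so $t\mapsto\frac{\phi(x,t)^p}{t^p}$ is $L^p$-almost increasing, i.e.\ \ainc{p} holds with $L_p=L^p$. To then place $\phi^p$ in $\Phiw(\Omega)$ I would go down the list in Definition~\ref{def2-1}: measurability of $y\mapsto\phi(y,f(y))^p$ follows from measurability of $y\mapsto\phi(y,f(y))$ and continuity of $s\mapsto s^p$; monotonicity of $t\mapsto\phi(x,t)^p$ follows from monotonicity of $t\mapsto\phi(x,t)$ and of $s\mapsto s^p$; the limit conditions $\phi(x,0)^p=0$, $\lim_{t\to0^+}\phi(x,t)^p=0$ and $\lim_{t\to\infty}\phi(x,t)^p=\infty$ follow from the corresponding limits for $\phi$ and continuity of the power map. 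The last requirement, that $t\mapsto\frac{\phi(x,t)^p}{t}$ be almost increasing (i.e.\ \ainc{1} for $\phi^p$), follows from the \ainc{p} just proved because $p\ge1$: for $s\le t$, $\frac{\phi(x,s)^p}{s}=\frac{\phi(x,s)^p}{s^p}s^{p-1}\le L^p\frac{\phi(x,t)^p}{t^p}s^{p-1}\le L^p\frac{\phi(x,t)^p}{t^p}t^{p-1}=L^p\frac{\phi(x,t)^p}{t}$.

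The three numbered items are then short. For (1), $\frac{\phi(x,t)^p}{t^{qp}}=\big(\frac{\phi(x,t)}{t^q}\big)^p$, and raising the $L_q$-almost decreasing function from \adec{q} to the power $p$ yields an $L_q^p$-almost decreasing function, which is exactly \adec{qp}. For (2), left-continuity of $t\mapsto\phi(x,t)^p$ is the composition of the left-continuous $t\mapsto\phi(x,t)$ with the continuous $s\mapsto s^p$. For (3), if $\beta\in(0,1]$ witnesses \azero{} for $\phi$, then $\phi(x,\beta)^p\le 1$ and $\phi(x,\tfrac1\beta)^p\ge 1$ for a.e.\ $x$, so the same $\beta$ witnesses \azero{} for $\phi^p$.

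I expect no serious obstacle here: the argument is entirely a matter of propagating the stability properties of $\phi$ through the power map. The only point needing a little care is the implication \ainc{p}$\,\Rightarrow\,$\ainc{1} for $p\ge1$ invoked to place $\phi^p$ in $\Phiw(\Omega)$, and the bookkeeping of the almost-monotonicity constants ($L\mapsto L^p$ for \ainc{} and $L_q\mapsto L_q^p$ for \adec{}) under composition with $s\mapsto s^p$.
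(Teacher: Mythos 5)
Your proposal is correct and follows essentially the same route as the paper: verify the axioms of Definition~\ref{def2-1} for $\phi^p$ by propagating them through the increasing continuous map $s\mapsto s^p$, obtain \ainc{p} from the identity $\frac{\phi(x,t)^p}{t^p}=\big(\frac{\phi(x,t)}{t}\big)^p$, and treat (1)--(3) by the analogous one-line reductions. The only difference is that you spell out the step \ainc{p}$\Rightarrow$\ainc{1} for $p\ge1$ and track the constants $L\mapsto L^p$, which the paper leaves implicit.
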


\begin{proof}
Since $t\mapsto t^p$ is continuous and increasing in $[0, \infty)$, 
the first three requirements of Definition~\ref{def2-1} hold for $\phi^p$.
Since $\phi$ satisfies \ainc{1} and $t \mapsto t^p$ is increasing, we obtain by $\frac{\phi^p(x, t)}{t^p} = \big( \frac{\phi(x, t)}{t}\big)^p$ that $\phi^p$ satisfies \ainc{p}. 

(1) Assume that $\phi$ satisfies \adec{q}. Then by $\frac{\phi^p(x, t)}{t^{pq}} = \big( \frac{\phi(x, t)}{t^q}\big)^p$ we see that $\phi^p$ satisfies \adec{pq}. 

(2) This claims follows since $t \mapsto t^p$ is continuous.

(3) It follows from \azero{} that $\phi(x, \beta)^p \le 1 \le \phi(x, \frac1\beta)^p$, 
which is \azero{} of $\phi^p$. 
%
\end{proof}


\subsection{\texorpdfstring{$\Gamma$}{Gamma}-convergence}~\\
Assume that $u_0 \in L^{1, \phi^r}(\Omega) \cap L^1(\Omega)$ for some $r >1$.
For $p \in (1, r)$ we consider $E_p: L^1(\Omega) \to [0, \infty]$ and 
$E: L^1 (\Omega) \to [0, \infty]$ from the introduction, see \eqref{eq:Ep}. 
Note that $E$ is a variant of $\rholiminf$ where we have fixed the boundary values. 


As we mentioned in the introduction, we want to show 
that $E_{p_i}$ $\Gamma$-converges to $E$ in the topology of $L^1(\Omega)$ as $p_i \to 1^+$. 
We do so by first considering the 
$\Gamma$-liminf inequality, and then finding the recovery sequence in Lemma~\ref{lem:Gamma-limsup}. 
This establishes the two properties of 
$\Gamma$-convergence and completes the proof of Theorem~\ref{thm:Gamma}(1).

\begin{lem}\label{lem:Gamma-liminf}
Let $\phi \in \Phiw(\Omega)$.
Assume that  $u_0 \in L^{1, \phi^r}(\Omega)$ for some $r >1$.
Let $u \in L^1(\Omega)$, $u_i\to u$ in $L^1(\Omega)$ and $p_i \to 1^+$.
Then
\[
 E(u) \le \liminf_{i \to \infty} E_{p_i}(u_i).
\] 
\end{lem}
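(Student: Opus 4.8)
The plan is to use the given sequence $(u_i)$ itself, along a suitable subsequence, as a competitor in the infimum defining $E(u)$. If $\liminf_{i\to\infty}E_{p_i}(u_i)=\infty$ there is nothing to prove, so we pass to a subsequence, not relabelled, along which $E_{p_i}(u_i)\to\Lambda:=\liminf_{i\to\infty}E_{p_i}(u_i)<\infty$ and every term is finite. By the definition \eqref{eq:Ep} of $E_p$, finiteness of $E_{p_i}(u_i)$ means precisely that $u_i-u_0\in L^{1,\phi^{p_i}}_{0}(\Omega)$ and $E_{p_i}(u_i)=\int_\Omega\phi(x,|\nabla u_i|)^{p_i}\,dx$.

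The first step is to control the limiting energy $\int_\Omega\phi(x,|\nabla u_i|)\,dx$ by $E_{p_i}(u_i)$. Since $\Omega$ is bounded, Hölder's inequality with exponents $p_i$ and $p_i'=\frac{p_i}{p_i-1}$ gives
\[
\int_\Omega\phi(x,|\nabla u_i|)\,dx
\le\Big(\int_\Omega\phi(x,|\nabla u_i|)^{p_i}\,dx\Big)^{1/p_i}|\Omega|^{1/p_i'}
=E_{p_i}(u_i)^{1/p_i}\,|\Omega|^{1/p_i'}.
\]
As $p_i\to1^+$ we have $p_i'\to\infty$, hence $|\Omega|^{1/p_i'}\to1$ and $E_{p_i}(u_i)^{1/p_i}\to\Lambda$, so that $\limsup_{i\to\infty}\int_\Omega\phi(x,|\nabla u_i|)\,dx\le\Lambda$. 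One could equally invoke Young's inequality $t\le p_i^{-1}t^{p_i}+(p_i')^{-1}$ here, again using $|\Omega|<\infty$.

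The main point, and the step I expect to be the real obstacle, is verifying that $(u_i)$ is an \emph{admissible} test sequence for $E(u)$, that is, that $u_i-u_0\in L^{1,\phi}_{0}(\Omega)$ and not merely $u_i-u_0\in L^{1,\phi^{p_i}}_{0}(\Omega)$. The plan is to establish the embedding $L^{1,\phi^{p_i}}_{0}(\Omega)\subseteq L^{1,\phi}_{0}(\Omega)$ by transferring the defining $C_0^\infty$-approximation. Given $\xi_k\in C_0^\infty(\Omega)$ with $\xi_k\to u_i-u_0$ in $L^{1,\phi^{p_i}}(\Omega)$, set $g_k:=\nabla\xi_k-\nabla(u_i-u_0)$, so that $\|g_k\|_{\phi^{p_i}}\to0$ and hence $\rho_{\phi^{p_i}}(\lambda|g_k|)\to0$ for every $\lambda>0$. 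The same Hölder inequality as above yields
\[
\rho_\phi(\lambda|g_k|)=\int_\Omega\phi(x,\lambda|g_k|)\,dx
\le\Big(\int_\Omega\phi(x,\lambda|g_k|)^{p_i}\,dx\Big)^{1/p_i}|\Omega|^{1/p_i'}
=\rho_{\phi^{p_i}}(\lambda|g_k|)^{1/p_i}\,|\Omega|^{1/p_i'}\longrightarrow0
\]
for every $\lambda>0$, so $\|g_k\|_\phi\to0$; thus $\xi_k\to u_i-u_0$ also in $L^{1,\phi}(\Omega)$, which gives $u_i-u_0\in L^{1,\phi}_{0}(\Omega)$. (Taking $\xi_k=0$ in the same estimate shows $\nabla(u_i-u_0)\in L^\phi(\Omega)$, so all the objects involved are well defined.) It is worth stressing that this argument uses only the boundedness of $\Omega$; no \azero{} or \adec{} is needed, consistent with the remark following Theorem~\ref{thm:Gamma}.

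Finally, assembling the pieces: along the chosen subsequence $(u_i)$ satisfies $u_i-u_0\in L^{1,\phi}_{0}(\Omega)$ and $u_i\to u$ in $L^1(\Omega)$, so it is admissible in the infimum defining $E(u)$. Therefore
\[
E(u)\le\liminf_{i\to\infty}\int_\Omega\phi(x,|\nabla u_i|)\,dx\le\Lambda=\liminf_{i\to\infty}E_{p_i}(u_i),
\]
where in the last equality we used that the subsequence was chosen to realise the original $\liminf$. This is the desired inequality.
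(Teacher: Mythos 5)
Your proof is correct and follows essentially the same route as the paper's: take a subsequence realising the $\liminf$, use $(u_i)$ itself as a test sequence in the infimum defining $E(u)$, and control $\int_\Omega\phi(x,|\nabla u_i|)\,dx$ by $\int_\Omega\phi(x,|\nabla u_i|)^{p_i}\,dx$ on the bounded domain (the paper uses Young's inequality where you use H\"older, which is an interchangeable detail you yourself note). The only substantive difference is that you explicitly verify the admissibility point $L^{1,\phi^{p_i}}_{0}(\Omega)\subseteq L^{1,\phi}_{0}(\Omega)$, which the paper's proof uses implicitly without comment; this is a welcome addition rather than a deviation.
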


\begin{proof}
If $K:=\liminf_{i \to \infty} E_{p_i} (u_i) =\infty$, then there is nothing to prove, 
so we assume that $K<\infty$. 
We restrict our attention to a subsequence with $\lim_{i \to \infty} E_{p_i} (u_i)=K$ 
and $u_i- u_0\in L^{1,\phi^{p_i}}_{0}(\Omega)$. 
Since $u_i \to u$ in $L^1(\Omega)$, we obtain 
by the definition of $ E(u)$ that
\[
 E(u) \le \liminf_{i \to \infty} \int_\Omega \phi(x, {|\nabla u_i|) \, dx}.
\]
From Young's inequality we conclude that $ab \le a^p + (p-1)(\frac bp)^{p'}$. 
Using this with $a=\phi(x, |\nabla u_i|)$, $b=1$ and $p=p_i$, we continue the previous estimate by 
\begin{equation*}
\begin{split}
E(u) 
&\le \liminf_{i\to \infty} \Big(\int_\Omega \phi^{p_i}(x, {|\nabla u_i|}) \, dx + |\Omega|(p_i-1) p_i^{-p_i'} \Big)
= \liminf_{i\to \infty} \int_\Omega \phi^{p_i}(x, {|\nabla u_i|}) \, dx,
\end{split}
\end{equation*}
where the equality holds since $(p_i-1) p_i^{-p_i'} \to 0 \cdot \frac1e=0$ as $p_i \to 1^+$.
\end{proof}

We now turn our attention to the $\Gamma$-limsup property. For this part we need to assume \dec{}.
Recall that \adec{} implies \dec{} if $\phi$ is convex \cite[Lemma~2.2.6]{HarH19}. 

\begin{lem}\label{lem:Gamma-limsup}
Assume that $\phi \in \Phiw(\Omega)$ satisfies \azero{} and \dec{}.
Let $u\in L^1(\Omega)$ and $u_0\in L^{1, \phi^r}(\Omega)$ for some $r>1$.
For every $p_i\to 1^+$ there exist $u_i \in \Lspace{\Omega}$ such that $u_i -u_0 \in L^{1, \phi}_0(\Omega)$,
$u_i \to u$ in $L^1(\Omega)$ and 
\[
 E(u) \ge \limsup_{i \to \infty} E_{p_i}(u_i).
\]
\end{lem}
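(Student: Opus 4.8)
The plan is to establish the $\Gamma$-$\limsup$ (recovery sequence) inequality. The key observation is that the limit functional $E$ is defined precisely as an infimum of $\liminf$'s over approximating sequences, so the natural strategy is to start from a near-optimal sequence for $E(u)$ and then carefully diagonalize against the parameters $p_i \to 1^+$. First I would dispose of the trivial case $E(u) = \infty$, where any $L^1$-approximating sequence with finite energies works (here \azero{} is presumably used to guarantee such a sequence exists). So assume $E(u) < \infty$. By the definition of $E$ and a diagonal argument (as already noted after Definition~\ref{defn:norms}), I would fix a single sequence $(v_j) \subset \Lspace{\Omega}$ with $v_j - u_0 \in L^{1,\phi}_0(\Omega)$, $v_j \to u$ in $L^1(\Omega)$, and
\[
\lim_{j\to\infty}\int_\Omega \phi(x, |\nabla v_j|)\, dx = E(u).
\]

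Next I would control the $\phi^{p_i}$-energies of these fixed functions $v_j$ as $p_i \to 1^+$. The point is that \dec{q} gives, for $\phi(x,t) \le 1$, the bound $\phi^{p_i} \le \phi$ trivially (since raising a number $\le 1$ to a power $>1$ decreases it), while on the set where $\phi(x,|\nabla v_j|) > 1$ one needs a reverse estimate. This is where \dec{} and \azero{} enter: \adec{}/\dec{} provides an upper growth rate so that $\phi(x,t)^{p_i}$ can be dominated in terms of $\phi(x,t)$ plus a controlled error as $p_i \to 1$, and one should check that $\int_\Omega \phi^{p_i}(x,|\nabla v_j|)\,dx \to \int_\Omega \phi(x,|\nabla v_j|)\,dx$ for each fixed $j$, by a dominated-convergence argument (using that $\phi^{p_i}(x,|\nabla v_j|) \le \phi^r(x,|\nabla v_j|) + 1$ for $p_i < r$, which is integrable because, via \dec{}, $\nabla v_j \in L^\phi$ combined with the boundary data assumption $u_0 \in L^{1,\phi^r}$ controls the high-energy part). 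The boundary-value requirement $u_i - u_0 \in L^{1,\phi}_0(\Omega)$ is inherited directly from $v_j - u_0 \in L^{1,\phi}_0(\Omega)$, so no extra work is needed there beyond noting $L^{1,\phi}_0 \subset L^{1,\phi^{p_i}}_0$ (or the reverse inclusion as appropriate, using \dec{} to relate the spaces).

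With the two ingredients in hand, I would run a diagonal selection: for each $i$, choose $j(i)$ large enough (and increasing) so that $\|v_{j(i)} - u\|_1 \to 0$ and
\[
\int_\Omega \phi^{p_i}(x, |\nabla v_{j(i)}|)\, dx \le \int_\Omega \phi(x, |\nabla v_{j(i)}|)\, dx + \tfrac1i,
\]
using the per-$j$ convergence established in the previous step together with the monotone control in $p_i$. Setting $u_i := v_{j(i)}$ then yields $u_i \to u$ in $L^1$, $u_i - u_0 \in L^{1,\phi}_0(\Omega)$, and
\[
\limsup_{i\to\infty} E_{p_i}(u_i) \le \limsup_{j\to\infty}\int_\Omega \phi(x, |\nabla v_j|)\, dx = E(u),
\]
which is the desired inequality.

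The main obstacle I anticipate is making the diagonalization uniform: the convergence $\int \phi^{p_i}(x,|\nabla v_j|)\,dx \to \int \phi(x,|\nabla v_j|)\,dx$ holds for each \emph{fixed} $j$, but in the diagonal I let both $i$ and $j(i)$ tend to infinity simultaneously, so I must ensure the error $\int \phi^{p_i} - \int \phi$ stays small \emph{along the diagonal}. This requires a quantitative, $j$-uniform bound on that error in terms of $p_i - 1$, which is exactly what \dec{} should deliver: \dec{q} gives $\phi(x,t)^{p_i} \le \phi(x,t)\,\phi(x,t)^{p_i-1}$ with $\phi(x,t)^{p_i - 1}$ controlled through the upper growth, yielding an estimate whose error is governed by $(p_i - 1)$ and an integrable majorant independent of $j$ (coming from $u_0 \in L^{1,\phi^r}$). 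Verifying that this majorant is indeed uniform over the relevant $v_j$ — rather than merely finite for each $j$ — is the technical heart of the argument, and is where the hypothesis $r > 1$ and the stronger integrability of the boundary datum are genuinely exploited.
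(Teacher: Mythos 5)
There is a genuine gap at the dominated-convergence step. For a general element $v_j$ of $u_0+L^{1,\phi}_0(\Omega)$ you only know $\nabla v_j\in L^\phi(\Omega)$, and this gives no control on $\int_\Omega \phi^{p}(x,|\nabla v_j|)\,dx$ for $p>1$: the proposed majorant $\phi^r(x,|\nabla v_j|)+1$ is simply not integrable in general (already for $\phi(x,t)=t$ on an interval one can have $|\nabla v_j|=g$ with $\int g\,dx<\infty$ but $\int g^{p}\,dx=\infty$ for every $p>1$, e.g.\ $g(x)=x^{-1}(\log(e/x))^{-2}$). The hypothesis $u_0\in L^{1,\phi^r}(\Omega)$ does not rescue this, because $v_j-u_0$ need only lie in $L^{1,\phi}_0(\Omega)$. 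Consequently $E_{p_i}(v_j)$ may equal $+\infty$ for every $i$ and $j$, and no diagonal selection can work. Relatedly, your remark that $L^{1,\phi}_0\subset L^{1,\phi^{p_i}}_0$ goes the wrong way: the inclusion runs from the larger exponent to the smaller one, and finiteness of $E_{p_i}(u_i)$ requires $u_i-u_0\in L^{1,\phi^{p_i}}_0(\Omega)$, which a generic $v_j$ does not satisfy. The paper's proof inserts exactly the missing step: each $v_k$ is further approximated by $u_0+\zeta_j^k$ with $\zeta_j^k\in C_0^\infty(\Omega)$ in the $L^{1,\phi}$-norm; \dec{} (via Lemma~3.1.6 of \cite{HarH_book}) converts norm convergence into convergence of the $\phi$-modulars, and since $\nabla\zeta_j^k$ is bounded, $\nabla(u_0+\zeta_j^k)$ genuinely belongs to $L^{\phi^r}(\Omega)$. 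Only after this first diagonalization does one work with functions $u_0+\xi_j$ whose $\phi^r$-energy is finite.

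The second half of your plan also needs repair, though more benignly. You look for a $j$-uniform majorant controlling $\int\phi^{p_i}-\int\phi$ in terms of $p_i-1$; such uniformity is neither available nor needed. The paper instead interpolates: by H\"older's inequality with exponent $q$,
\[
\int_\Omega \phi(x,|\nabla(u_0+\xi_j)|)^{\frac1q+\frac r{q'}}\,dx
\le
\Big(\int_\Omega \phi(x,|\nabla(u_0+\xi_j)|)\,dx\Big)^{\frac1q}
\Big(\int_\Omega \phi(x,|\nabla(u_0+\xi_j)|)^{r}\,dx\Big)^{\frac1{q'}},
\]
and for each fixed $j$ the second factor is made $\le 1+\frac1j$ by taking $q$ close enough to $1$ (this is where the finiteness of the $\phi^r$-energy of $u_0+\xi_j$ enters, and the admissible threshold for $q$ depends on $j$). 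One then sets $q_i=\frac{r-1}{r-p_i}$, so that $\frac1{q_i}+\frac r{q_i'}=p_i$, and couples $i$ to $j$ so that $q_i$ lies below the $j$-dependent threshold; the recovery bound then runs through $(\int\phi)^{1/q_i}\to\int\phi$. Your identity $\phi^{p_i}=\phi\cdot\phi^{p_i-1}$ with ``$\phi^{p_i-1}$ controlled through the upper growth'' cannot substitute for this: \dec{} bounds the growth rate of $t\mapsto\phi(x,t)$, not the size of $\phi(x,t)$ itself, so $\phi^{p_i-1}$ is unbounded wherever $\phi$ is large.
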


\begin{proof}
If $ E(u) =\infty$, then any approximating sequence in $u_0+C_0^\infty(\Omega)$ works \cite[Corollary~3.7.10]{HarH_book}, 
so we assume that $ E(u) < \infty$. 
By the definition of $E(u),$ there exists a sequence $(v_k)$ from $u_0+L^{1, \phi}_0(\Omega)$ such that
$v_k \to u$ in $L^1(\Omega)$ and 
\[
 E(u) = \lim_{k \to \infty} \int_\Omega \phi(x, |\nabla v_k|) \, dx.
\]
Since $v_k -u_0\in L^{1, \phi}_{0}(\Omega)$, there exists a sequence $(\zeta_j^k)$ from 
$C^\infty_0(\Omega)$ with $u_0 + \zeta_j^k \to v_k$ in $\Lspace{\Omega}$.
Since $\phi$ satisfies \dec{} this yields by Lemma 3.1.6 of \cite{HarH_book} that
\[
\int_\Omega \phi(x, {|\nabla v_k|}) \, dx = \lim_{j \to \infty} \int_\Omega \phi(x, {|\nabla (u_0 + \zeta_j^k)|}) \, dx.
\]
Thus by the usual diagonal argument, we can choose $\xi_j := \zeta_{k_j}^j$ such that 
$u_0 + \xi_j \to u$ in $L^1(\Omega)$ and 
\[
 E(u)= \lim_{j \to \infty} \int_\Omega \phi(x, {|\nabla (u_0 + \xi_j)|}) \, dx. 
\]

By Hölder's inequality with exponent $q>1$ we obtain
\[
\int_\Omega \phi(x, {|\nabla (u_0 + \xi_j)|})^{\frac1q + \frac r{q'}} \, dx
\le \Big( \int_\Omega \phi(x, {|\nabla (u_0 + \xi_j)|}) \, dx \Big)^{\frac1{q}} 
\Big(\int_\Omega \phi(x, {|\nabla (u_0 + \xi_j)|})^{r} \, dx \Big)^{\frac1{q'}}.
\]
The second integral is finite since $u_0 +\xi_j\in W^{\phi^r}(\Omega)$. 
Thus we may choose $\overline{q_j}\in (1, 1+\frac1j)$ close enough to $1$ that the second factor on the right hand side is at most 
$1+ \frac1j$ for all $q\in [1, \overline{q_j}]$. Since $p_i\to 1^+$ there exists $i_j\ge j$ such that 
$q_i:=\frac{r-1}{r-p_i}\le \overline{q_j}$ for all $i\ge i_j$. 
Note that $\frac1{q_i} + \frac r{q_i'} = p_i$. We now choose 
$u_i := u_0 +\xi_j$ for all $i \in [\max\{i_1, \ldots, i_{j}\}, i_{j+1})$. 
Then 
\[
\int_\Omega \phi(x, |\nabla u_i|)^{p_i} \, dx 
\le 
\Big(1+\frac1{j(i)}\Big) \Big( \int_\Omega \phi(x, |\nabla u_i|) \, dx \Big)^{\frac1{q_i}},
\]
where $j(i)\to \infty$ as $i\to \infty$ since $\overline {q_j}>1$ so only finitely many values of 
$i$ can have $q_i>\overline {q_j}$.

Next we combine all the above estimates, and obtain
\[
\begin{split}
E(u) 
= \lim_{i \to \infty} \int_\Omega \phi(x, | \nabla u_i|) \, dx 
&\ge \limsup_{i \to \infty} \Big(1+\frac1{j(i)}\Big)^{-q_i} \Big(\int_\Omega \phi(x, |\nabla u_i|)^{p_i} \, dx\Big)^{q_i}\\
&= \limsup_{i \to \infty} \int_\Omega \phi(x, {|\nabla u_i|})^{p_i} \, dx = \limsup_{i \to \infty} E_{p_i}(u_i),
\end{split}
\]
where the last equality follows since $u_i- u_0 = \xi_j \in L^{1, \phi}_{0}(\Omega)$.
\end{proof}

\subsection{A sequence of minimizers}~\\
We continue by establishing some results about sequences of minimizers of the functionals 
$E_p$ from \eqref{eq:Ep}. These results are in the spirit of Juutinen \cite{Juu05}.

\begin{lem}\label{lem:min-exists}
Let $\phi \in \Phi_c(\Omega)$ satisfy \azero{} and \adec{}, 
$p>1$ and $u_0 \in L^{1, \phi^p}(\Omega)$. Then there exists a unique function $u_p \in L^{1, \phi^p}(\Omega)$ which satisfies $u_p- u_0 \in L^{1, \phi^p}_0(\Omega)$ and 
\[
E_p(u_p) = \inf_{u \in L^1(\Omega)} E_p(u).
\]
\end{lem}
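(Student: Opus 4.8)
The plan is to run the direct method of the calculus of variations in the reflexive space $L^{1,\phi^p}(\Omega)$ and to extract uniqueness from strict convexity. First I would record that $\phi^p$ is a \emph{convex} $\Phi$-function: by Lemma~\ref{lem:phi^p} it lies in $\Phiw(\Omega)$ and satisfies \ainc{p} (hence \ainc{}), \azero{}, and \adec{qp} (hence \adec{}); convexity follows because $s\mapsto s^p$ is convex and increasing for $p\ge 1$ while $\phi(x,\cdot)$ is convex, so the composition $\phi(x,\cdot)^p$ is convex. Thus $\phi^p\in\Phic(\Omega)$ satisfies the hypotheses of both Lemma~\ref{lem:reflexive} and Lemma~\ref{lem:lsc}: the spaces $L^{1,\phi^p}(\Omega)$ and $L^{1,\phi^p}_0(\Omega)$ are reflexive, and the modular $\rho_{\phi^p}$ is weakly lower semicontinuous.

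For existence, the infimum is finite since the admissible choice $u=u_0$ gives $E_p(u_0)=\rho_{\phi^p}(|\nabla u_0|)<\infty$ because $u_0\in L^{1,\phi^p}(\Omega)$. I would take a minimizing sequence $(u_k)$ with $u_k-u_0\in L^{1,\phi^p}_0(\Omega)$ and $E_p(u_k)\to m:=\inf E_p$. Since $\rho_{\phi^p}(|\nabla u_k|)=E_p(u_k)$ is bounded, convexity of $\rho_{\phi^p}$ together with $\phi^p(x,0)=0$ gives $\rho_{\phi^p}(|\nabla u_k|/C)\le 1$ for a suitable constant $C$, so $(\nabla u_k)$ is bounded in $L^{\phi^p}(\Omega)$ and, by the embedding $L^{\phi^p}(\Omega)\hookrightarrow L^1(\Omega)$ on the bounded domain $\Omega$, also in $L^1(\Omega)$; a Poincaré inequality applied to $u_k-u_0\in W^{1,1}_0(\Omega)$ then bounds $(u_k)$ in $L^1(\Omega)$, so $(u_k)$ is bounded in $L^{1,\phi^p}(\Omega)$. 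By reflexivity and the identification argument in the proof of Lemma~\ref{lem:reflexive}, a subsequence satisfies $u_k\to u_p$ in $L^1(\Omega)$ and $\nabla u_k\rightharpoonup\nabla u_p$ in $L^{\phi^p}(\Omega)$; as $L^{1,\phi^p}_0(\Omega)$ is convex and closed, hence weakly closed, the constraint passes to the limit and $u_p-u_0\in L^{1,\phi^p}_0(\Omega)$. Finally Lemma~\ref{lem:lsc} applied to $\phi^p$ yields
\[
E_p(u_p)=\rho_{\phi^p}(|\nabla u_p|)\le\liminf_{k\to\infty}\rho_{\phi^p}(|\nabla u_k|)=m,
\]
and since $u_p$ is admissible $E_p(u_p)\ge m$, so $u_p$ is a minimizer.

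For uniqueness, suppose $u_p$ and $\tilde u_p$ both minimize. The midpoint $w=\tfrac12(u_p+\tilde u_p)$ is admissible, and using monotonicity and the triangle inequality followed by convexity of $\phi(x,\cdot)^p$,
\[
E_p(w)\le\int_\Omega\phi\Big(x,\tfrac{|\nabla u_p|+|\nabla\tilde u_p|}2\Big)^p\,dx\le\tfrac12 E_p(u_p)+\tfrac12 E_p(\tilde u_p)=m,
\]
so every inequality is an equality almost everywhere. The crucial observation is that \adec{} together with convexity forces $\phi(x,\cdot)$ to be strictly increasing on $[0,\infty)$: the almost-decreasing condition on $\phi(x,t)/t^{qp}$ excludes $\phi(x,t)=0$ for $t>0$, and a convex function vanishing at $0$ but positive on $(0,\infty)$ cannot be flat on any nondegenerate interval. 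Combining this with the strict convexity of $s\mapsto s^p$ for $p>1$, equality in the convexity step forces $|\nabla u_p|=|\nabla\tilde u_p|$ a.e., while equality in the triangle inequality forces $\nabla u_p$ and $\nabla\tilde u_p$ to be non-negatively parallel; hence $\nabla u_p=\nabla\tilde u_p$ a.e. Since $u_p-\tilde u_p\in W^{1,1}_0(\Omega)$ has vanishing gradient on the connected set $\Omega$, it is the zero function, so $u_p=\tilde u_p$.

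The routine part is existence, which is essentially bookkeeping once reflexivity and weak lower semicontinuity are transferred to $\phi^p$. The main obstacle is the uniqueness step: one must convert the equality case of the convexity chain into genuine pointwise identities, and this hinges on recognizing that the assumptions \adec{} and convexity upgrade $\phi$ to a strictly increasing function, so that the strict convexity supplied by the exponent $p>1$ is not absorbed by a degenerate (dead-zone) part of $\phi$.
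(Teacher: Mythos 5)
Your proof is correct and follows essentially the same route as the paper: the direct method using reflexivity of $L^{1,\phi^p}_0(\Omega)$ (Lemma~\ref{lem:reflexive}), the Poincar\'e inequality in $W^{1,1}_0(\Omega)$ combined with the embedding $L^{\phi^p}(\Omega)\hookrightarrow L^1(\Omega)$ coming from \azero{}, weak lower semicontinuity of the modular (Lemma~\ref{lem:lsc}) applied to $\phi^p$, and strict convexity for uniqueness. Your uniqueness step is actually more complete than the paper's one-line version: the paper asserts that convexity of $\phi$ together with strict convexity of $t\mapsto t^p$ makes $t\mapsto\phi(x,t)^p$ strictly convex, which is not literally true without a further input (e.g.\ $\phi(x,t)=\max\{t-1,0\}$ is convex with $\phi(x,0)=0$ yet $\phi^p$ vanishes on $[0,1]$), whereas you correctly identify that \adec{} excludes such a dead zone, and you also treat the equality case of the triangle inequality needed to pass from the scalar variable $t$ to the vector-valued gradient.
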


\begin{proof}
Let $(u_j)$ be a minimizing sequence of $E_p$. Since
\[
\inf_{u \in L^1(\Omega)} E_p(u) \le E_p(u_0) = \int_\Omega \phi(x, |\nabla u_0|)^p \, dx < \infty,
\]
we may assume that $(|\nabla u_j|)$ is bounded in $L^{\phi^p}(\Omega)$. 
By the Poincar\'e inequality  in $W^{1, 1}_0 (\Omega)$, we obtain that 
$(u_j-u_0)$ is bounded in $L^1(\Omega)$:
\[
\|u_j - u_0\|_1
\lesssim \|\nabla (u_j -u_0)\|_1 
\lesssim \|\nabla u_j\|_{\phi^p}+ \|\nabla u_0\|_{\phi^p},
\]
where the embedding in the last inequality holds since  $\phi^p$ satisfies \azero{} by Lemma~\ref{lem:phi^p}.
Since $\phi^p$ satisfies \ainc{} and \adec{}, 
$L^{1, \phi^p}_{0}(\Omega)$ is reflexive, by Lemma~\ref{lem:reflexive}. 
Thus there exists a subsequence $(u_{j_k}-u_0)$ and $u \in L^{1, \phi}_0(\Omega)$ such that $\nabla (u_{j_k} - u_0) \rightharpoonup \nabla u$ in $L^{\phi^p}(\Omega; \Rn)$. Set 
$u_p:=u+u_0$.Since $\phi\in \Phic(\Omega)$, Lemma~\ref{lem:lsc} implies that
\[
E_p(u_p) \le \liminf_{k\to \infty} E_p(u_{j_k}).
\]
Thus $u_p$ is a minimizer of $E_p$. 

Since $\phi$ is convex and $t\mapsto t^p$ strictly convex, we obtain that $t \mapsto \phi(x, t)^p$ is strictly convex. The usual argument yields uniqueness, namely, 
if $u$ and $v$ are distinct minimizers, then we obtain a contradiction from 
$E_p(\frac{u+v}2)< \frac12 (E_p(u)+ E_p(v))$. 
\end{proof}

Take a sequence $(p_j)$ such that $p_j\to 1^+$ and a sequence of minimizers $(u_j)$, where $u_j$ is the minimizer of $E_{p_j}$. Let us also assume that the $\phi^{p_j}$-modular of $u_0$ is finite for every $j$. Then the limit function exists in $\Lbar\Omega$. 

\begin{lem}\label{lem:subseq-BV}
Let  $p_j\to 1^+$ and $\phi \in \Phiw(\Omega)$. Assume that $\rho_{\phi^{p_j}}(u_0) \le K$ for every $j$. 
Let $(u_j)$ be a sequence of minimizers of $E_{p_j}$ with boundary value $u_0$.
Then there exist $u \in \Lbar{\Omega}$ and a subsequence $(u_{j_i})$ converging to $u$ in $L^1(\Omega)$ with
\[
E(u) 
\le
\liminf_{i\to \infty} E_{p_{j_i}}(u_{j_i}). 
\]
\end{lem}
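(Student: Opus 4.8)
The plan is to produce a uniform bound on the minimizers $(u_j)$ that lets us extract an $L^1$-convergent subsequence, and then apply the already-established $\Gamma$-liminf inequality (Lemma~\ref{lem:Gamma-liminf}) to the convergent subsequence to obtain the stated inequality. The natural limit space is $\Lbar{\Omega}$ because, as noted after Definition~\ref{defn:norms}, the functional $E$ is a variant of $\rholiminf$ with fixed boundary values.

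First I would control the energies. Since $u_0-u_0\in L^{1,\phi^{p_j}}_0(\Omega)$ and $\rho_{\phi^{p_j}}(u_0)$ is linked to $\|\nabla u_0\|$, the minimality of $u_j$ gives
\[
\int_\Omega \phi(x,|\nabla u_j|)^{p_j}\,dx = E_{p_j}(u_j)\le E_{p_j}(u_0)=\int_\Omega\phi(x,|\nabla u_0|)^{p_j}\,dx\le K,
\]
using the hypothesis $\rho_{\phi^{p_j}}(u_0)\le K$. This is a uniform bound on the $\phi^{p_j}$-modular of the gradients. The next step is to convert it into a uniform $L^1$-bound on the gradients: by Young's inequality (exactly as in Lemma~\ref{lem:Gamma-liminf}, with $a=\phi(x,|\nabla u_j|)$, $b=1$, $p=p_j$), one has $\phi(x,|\nabla u_j|)\le \phi(x,|\nabla u_j|)^{p_j}+(p_j-1)p_j^{-p_j'}$, so
\[
\int_\Omega\phi(x,|\nabla u_j|)\,dx \le K + |\Omega|(p_j-1)p_j^{-p_j'},
\]
and the last term is bounded (it tends to $0$) as $p_j\to1^+$. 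Since $\phi\in\Phiw(\Omega)$ satisfies \ainc{1}, we have $t\lesssim \phi(x,t)+1$ pointwise after invoking \azero{} for small~$t$, hence $\|\nabla u_j\|_{L^1}$ is uniformly bounded; combined with the Poincaré inequality in $W^{1,1}_0(\Omega)$ applied to $u_j-u_0$, the sequence $(u_j)$ is bounded in $\BV(\Omega)$ (or already in $W^{1,1}(\Omega)$).

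With a uniform $\BV$-bound in hand, compactness of the embedding $\BV(\Omega)\hookrightarrow L^1(\Omega)$ for bounded domains yields a subsequence $(u_{j_i})$ converging in $L^1(\Omega)$ to some $u\in L^1(\Omega)$. Applying Lemma~\ref{lem:Gamma-liminf} to this convergent subsequence (with $p_i=p_{j_i}\to1^+$) gives directly
\[
E(u)\le\liminf_{i\to\infty}E_{p_{j_i}}(u_{j_i}),
\]
which is finite by the energy bound above; by the ``moreover'' clause in Lemma~\ref{lem:lsc-rholiminf} (or by unwinding the definition of $E$ as a fixed-boundary-value variant of $\rholiminf$), finiteness of $E(u)$ places $u\in\Lbar{\Omega}$.

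The main obstacle I anticipate is the step converting the modular bound into a genuine $\BV$-compactness statement: one must ensure the embedding constant from $L^\phi$-type control into $L^1$ is uniform in $p_j$, which is exactly why \azero{} for $\phi$ (and hence for $\phi^{p_j}$, via Lemma~\ref{lem:phi^p}) is invoked — it makes the Poincaré/embedding estimate uniform. A minor technical point is that the uniform bound $\rho_{\phi^{p_j}}(u_0)\le K$ is stated as a hypothesis rather than derived, so the energy comparison $E_{p_j}(u_j)\le E_{p_j}(u_0)$ is immediate; the real work is the passage from the $p_j$-dependent modular bound to a single $\BV$-bound via Young's inequality, which is clean and parallels the computation already carried out in Lemma~\ref{lem:Gamma-liminf}.
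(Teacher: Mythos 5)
Your proposal is correct and follows essentially the same route as the paper's proof: uniform energy bound from minimality and the hypothesis on $u_0$, uniform control of $\|\nabla u_j\|_{L^1(\Omega)}$, the Poincar\'e inequality in $W^{1,1}_0(\Omega)$ applied to $u_j-u_0$, compactness of the embedding into $L^1(\Omega)$ to extract the subsequence, the $\Gamma$-liminf inequality (Lemma~\ref{lem:Gamma-liminf}) for the energy estimate, and Lemma~\ref{lem:lsc-rholiminf} to place $u$ in $\Lbar{\Omega}$. The only (immaterial) difference is one technical step: you pass from the $\phi^{p_j}$-modular bound to the $L^1$-gradient bound via Young's inequality together with \azero{}, whereas the paper uses the embedding $L^{\phi^{p_1}}(\Omega)\hookrightarrow L^{\phi}(\Omega)\hookrightarrow L^1(\Omega)$ combined with the norm--modular inequality $\|v\|_\phi\le\rho_\phi(v)+1$.
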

\begin{proof}
Since $u_j$ is a minimizer, we see as in the previous lemma that $E_{p_j}(u_j)\le K$. 
By $L^{\phi^{p_1}}(\Omega) \hookrightarrow L^\phi(\Omega)$ and 
$\|u\|_\phi\le \rho_\phi(u)+1$ we conclude that $\|\nabla u_j\|_\phi$ is bounded.
Since $u_j -u_0\in W^{1,1}_0(\Omega)$, we obtain 
as in the previous lemma by the Poincar\'e inequality that
$\|u_j-u_0\|_1 \lesssim 1 + \|\nabla u_0\|_{\phi^{p_1}}$.
Thus $(u_j-u_0)$ is bounded in $W^{1,1}_0(\Omega)$.
Since $W^{1,1}_0(\Omega)\hookrightarrow \hookrightarrow L^1(\Omega)$, 
there exists a subsequence $(u_{j_i})$ converging to $u$ in $L^1(\Omega)$.
The claim follows from the first property of $\Gamma$-convergence by Theorem~\ref{thm:Gamma}(1). 
Finally, Lemma~\ref{lem:lsc-rholiminf} yields that $u \in \Lbar {\Omega}$.
\end{proof}

\begin{prop}\label{prop:minimizer-E}
Let $\phi \in \Phiw(\Rn)$ and $\rho_{\phi^{p_j}}(u_0) \le K$ for every $j$.
The function $u\in \Lbar{\Omega}$ from Lemma~\ref{lem:subseq-BV} minimizes the $E$-energy, i.e.
\[
E(u) 
=
\inf_{v\in L^1(\Omega)} E(v).
\]
\end{prop}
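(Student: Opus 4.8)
The plan is to run the standard ``fundamental theorem of $\Gamma$-convergence'' argument, pairing the liminf inequality already secured in Lemma~\ref{lem:subseq-BV} with the recovery sequence from Lemma~\ref{lem:Gamma-limsup} and the minimality of each $u_{j_i}$. Since $u\in\Lbar{\Omega}\subset L^1(\Omega)$, the inequality $\inf_{v\in L^1(\Omega)}E(v)\le E(u)$ is automatic, so the whole proof reduces to showing $E(u)\le E(v)$ for every $v\in L^1(\Omega)$.

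Fix such a $v$. If $E(v)=\infty$ there is nothing to prove, so I would assume $E(v)<\infty$. Applying Lemma~\ref{lem:Gamma-limsup} to the sequence $(p_{j_i})$, which still tends to $1^+$, produces a recovery sequence $(v_i)$ with $v_i-u_0\in L^{1,\phi}_0(\Omega)$ — in fact $v_i=u_0+\xi_i$ with $\xi_i\in C^\infty_0(\Omega)$ — such that $v_i\to v$ in $L^1(\Omega)$ and
\[
\limsup_{i\to\infty} E_{p_{j_i}}(v_i)\le E(v).
\]
In particular each $v_i$ is admissible for $E_{p_{j_i}}$, since $v_i-u_0=\xi_i\in C^\infty_0(\Omega)\subset L^{1,\phi^{p_{j_i}}}_0(\Omega)$.

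Next I would invoke the minimality of $u_{j_i}$: because $u_{j_i}$ minimizes $E_{p_{j_i}}$ over all of $L^1(\Omega)$, we have $E_{p_{j_i}}(u_{j_i})\le E_{p_{j_i}}(v_i)$ for every $i$. Combining the liminf inequality from Lemma~\ref{lem:subseq-BV} with this minimality and the recovery estimate yields
\[
E(u)\le \liminf_{i\to\infty} E_{p_{j_i}}(u_{j_i})
\le \liminf_{i\to\infty} E_{p_{j_i}}(v_i)
\le \limsup_{i\to\infty} E_{p_{j_i}}(v_i)
\le E(v).
\]
Taking the infimum over $v\in L^1(\Omega)$ gives $E(u)\le \inf_{v}E(v)$, and together with the trivial reverse inequality this proves the claim.

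As for the main obstacle: the argument is conceptually routine once both halves of the $\Gamma$-convergence are in hand, so the only points requiring care are that the recovery sequence of Lemma~\ref{lem:Gamma-limsup} is built for the \emph{given} subsequence $(p_{j_i})$ and consists of competitors that are genuinely admissible for each $E_{p_{j_i}}$. Both are guaranteed because that lemma works for an arbitrary sequence tending to $1^+$ and produces competitors in $u_0+C^\infty_0(\Omega)$. One should also keep in mind that Lemma~\ref{lem:Gamma-limsup} requires \azero{} and \dec{}, so these hypotheses from the surrounding setting are implicitly used here even though only $\phi\in\Phiw(\Rn)$ is displayed in the statement.
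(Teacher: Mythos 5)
Your proof is correct and follows essentially the same route as the paper: both halves of the $\Gamma$-convergence (the liminf inequality from Lemma~\ref{lem:subseq-BV} and the recovery sequence from Lemma~\ref{lem:Gamma-limsup}) combined with the minimality of each $u_{j_i}$. The only difference is cosmetic: the paper phrases the same chain of inequalities as a proof by contradiction with an explicit $\epsilon$-gap argument, whereas you write the inequalities directly, which is if anything cleaner.
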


\begin{proof}
Assume by contradiction that there exists $v \in L^1(\Omega)$ such that 
$E(v)< E(u)$. 
Let $p_i\to 1^+$. 
By the second property of $\Gamma$-convergence from Theorem~\ref{thm:Gamma}(1), 
there exists a sequence $(v_i)$ such that 
$v_i - u_0 \in L^{1, \phi}_0(\Omega)$, 
$v_i \to v$ in $L^1(\Omega)$ and 
\[
E( v) \ge \limsup_{i \to \infty} E_{p_i}(v_i).
\]
By the assumption and Lemma~\ref{lem:subseq-BV}, we have
\[
\limsup_{i \to \infty} E_{p_i}(v_i) \le E(v)< E(u)\le \liminf_{j \to \infty} E_{p_{i_j}}(u_{i_j}).
\] 
Let $\epsilon := \liminf_{j \to \infty} E_{p_{i_j}}(u_{i_j}) - \limsup_{i \to \infty} E_{p_i}$, and choose $M$ so large that 
\[
E_{p_i}(v_i) < \limsup_{i \to \infty} E_{p_i}(v_i) + \frac{\epsilon}{2}
\quad \text{and} \quad 
E_{p_{i_j}}(v_{i_j}) > \liminf_{j \to \infty} E_{p_{i_j}}(u_{i_j})- \frac{\epsilon}{2}
\] 
as $i, j\ge M$. Thus for all $i, j \ge M$ we have 
\[
E_{p_i}(v_i) < E_{p_{i_j}}(u_{i_j}).
\]
Then we just choose $j\ge M$ with ${i_j} \ge M$ and obtain
\[
E_{p_{i_j}}(v_{i_j}) < E_{p_{i_j}}(u_{i_j}).
\]
But this contradicts that $u_{i_j}$ minimizes the $E_{p_{i_j}}$-energy with the boundary values $u_0$, 
so the assumption $E(v)<E(u)$ was wrong and the claim is proved.
\end{proof}


\section{Functionals with the fidelity term}
\label{sect:fidelityTerm}

Assume then that $f \in L^2(\Omega)$.
For $p >1$ we defined $F_p: L^2(\Omega) \to [0, \infty]$ and 
$F: L^2 (\Omega) \to [0, \infty]$ in the introduction, see \eqref{eq:Fp}. 
Note that $F$ is another a variant of $\rholiminf$, this time including a ``fidelity term'' in $L^2$. 
Since this implies that $u\in L^2(\Omega)$, it seems natural to consider the 
convergence in $L^2(\Omega)$, although convergence in $L^1(\Omega)$ is also a possibility.

\subsection{\texorpdfstring{$\Gamma$}{Gamma}-convergence}~\\
We want to show 
that $F_{p_i}$ $\Gamma$-converges to $F$ in the topology of $L^2(\Omega)$ as $p_i \to 1^+$, i.e.\ 
to prove Theorem~\ref{thm:Gamma}(2). We do so by first considering the 
$\Gamma$-liminf inequality, and then finding the recovery sequence in Lemma~\ref{lem:Gamma-limsup-F}.

\begin{lem}\label{lem:Gamma-liminf-F}
Let $u \in L^2(\Omega)$ and assume that $(u_i)$ is a sequence of $L^2$-functions converging to $u$ in $L^2(\Omega)$ and that $p_i \to 1^+$.
Then
\[
 F(u) \le \liminf_{i \to \infty} F_{p_i}(u_i).
\] 
\end{lem}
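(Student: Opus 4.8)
The plan is to follow the pattern of Lemma~\ref{lem:Gamma-liminf} essentially verbatim, the only new feature being the fidelity term $|u_i-f|^2$, which is common to both $F_{p_i}$ and $F$ and therefore passes through every estimate untouched. First I would dispose of the trivial case: if $K:=\liminf_{i\to\infty}F_{p_i}(u_i)=\infty$ there is nothing to prove, so I assume $K<\infty$ and pass to a subsequence along which $F_{p_i}(u_i)\to K$; along this subsequence necessarily $u_i\in L^{1, \phi^{p_i}}(\Omega)\cap L^2(\Omega)$, since otherwise $F_{p_i}(u_i)=\infty$ and the term could not contribute to a finite $K$. As $u_i\to u$ in $L^2(\Omega)$, each such $u_i$ is an admissible test function in the infimum defining $F(u)$, giving directly
\[
F(u)\le\liminf_{i\to\infty}\int_\Omega \phi(x,|\nabla u_i|)+|u_i-f|^2\,dx.
\]

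The second step is to trade $\phi$ for $\phi^{p_i}$ at the cost of a vanishing error. I would apply Young's inequality in the form $ab\le a^{p}+(p-1)(\tfrac bp)^{p'}$ with $a=\phi(x,|\nabla u_i|)$, $b=1$ and $p=p_i$, obtaining the pointwise bound
\[
\phi(x,|\nabla u_i|)\le \phi(x,|\nabla u_i|)^{p_i}+(p_i-1)p_i^{-p_i'}.
\]
Integrating over $\Omega$ and adding the (unchanged) fidelity term $|u_i-f|^2$ then yields
\[
\int_\Omega \phi(x,|\nabla u_i|)+|u_i-f|^2\,dx
\le F_{p_i}(u_i)+|\Omega|\,(p_i-1)p_i^{-p_i'}.
\]

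Finally I would take the limit inferior and use, exactly as in Lemma~\ref{lem:Gamma-liminf}, that the error term satisfies $(p_i-1)p_i^{-p_i'}\to 0\cdot\tfrac1e=0$ as $p_i\to 1^+$, to conclude
\[
F(u)\le\liminf_{i\to\infty}\big(F_{p_i}(u_i)+|\Omega|\,(p_i-1)p_i^{-p_i'}\big)=\liminf_{i\to\infty}F_{p_i}(u_i)=K.
\]

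I do not expect any genuine obstacle here. The fidelity term is a spectator that is identical on both sides of the comparison, so the argument requires no modification and no hypotheses on $\phi$ beyond $\phi\in\Phiw(\Omega)$, in line with the boundary-value case where the $\Gamma$-liminf half also needed nothing more. The only mild point worth stating carefully is the subsequence reduction, namely that restricting to terms with finite $F_{p_i}(u_i)$ is harmless; this is immediate since a term equal to $+\infty$ cannot enter a finite limit inferior $K$.
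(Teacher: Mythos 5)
Your proposal is correct and follows the paper's own proof essentially verbatim: the same reduction to a subsequence with finite limit, the same use of the $u_i$ as test functions in the definition of $F(u)$, and the same Young's inequality $ab\le a^{p}+(p-1)(\tfrac bp)^{p'}$ with $a=\phi(x,|\nabla u_i|)$, $b=1$, $p=p_i$, with the error term $|\Omega|(p_i-1)p_i^{-p_i'}\to 0$. No further comment is needed.
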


\begin{proof}
If $K:=\liminf_{i \to \infty} F_{p_i} (u_i) =\infty$, then there is nothing to prove, 
so we assume that $K<\infty$. 
We restrict our attention to a subsequence with $\lim_{i \to \infty} F_{p_i} (u_i)=K$ 
and $u_i\in L^{1,\phi^{p_i}}(\Omega)\cap L^2(\Omega)$. 
Since $u_i \to u$ in $L^2(\Omega)$, we obtain 
by the definition of $ F(u)$ that
\[
 F(u) \le \liminf_{i \to \infty} \Big(\int_\Omega \phi(x, |\nabla u_i|) +  |u_i-f|^2 \, dx \Big).
\]
From Young's inequality we conclude that $ab \le a^p + (p-1)(\frac bp)^{p'}$. 
Using this with $a=\phi(x, |\nabla u_i|)$, $b=1$ and $p=p_i$, we continue the previous estimate by 
\begin{equation*}
\begin{split}
F(u) 
&\le \liminf_{i\to \infty} \Big(\int_\Omega \phi^{p_i}(x, {|\nabla u_i|}) \, dx + |\Omega|(p_i-1) p_i^{-p_i'} +  \int_\Omega  |u_i-f|^2 \, dx\Big)\\
&= \liminf_{i\to \infty} \Big(\int_\Omega \phi^{p_i}(x, {|\nabla u_i|}) \, dx+  \int_\Omega  |u_i-f|^2 \, dx\Big),
\end{split}
\end{equation*}
where the equality holds since $(p_i-1) p_i^{-p_i'} \to 0 \cdot \frac1e=0$ as $p_i \to 1^+$.
\end{proof}

We now turn our attention to the $\Gamma$-limsup property. 
Note that in contrast to Lemma~\ref{lem:Gamma-limsup} we now have an assumption regarding density 
of smooth functions. The difference comes from the fact that $L^{1,\phi}_0$ is defined as the completion 
of $C^\infty_0$-functions, so smooth functions are automatically dense in that setting. 
We refer to \cite[Corollary 4.4]{Juu_pp} for a sufficient condition for the density of 
$C^\infty(\overline\Omega)$ in $W^{1,\phi}(\Omega)$ where $\Omega$ is an $(\epsilon,\infty)$-domain.

\begin{lem}\label{lem:Gamma-limsup-F}
Assume that $\phi \in \Phiw(\Omega)$ satisfies \azero{} and \dec{}, and that $C^\infty(\overline\Omega)$ is 
dense in $L^{1,\phi}(\Omega)\cap L^2(\Omega)$.
For every $u\in L^2(\Omega)$ and $p_i\to 1^+$ there exist $u_i \in L^{1, \phi}(\Omega) \cap L^2(\Omega)$ such that
$u_i \to u$ in $L^2(\Omega)$ and 
\[
F(u)  \ge \limsup_{i \to \infty} F_{p_i}(u_i).
\]
\end{lem}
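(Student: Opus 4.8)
The plan is to mirror the structure of the recovery-sequence argument in Lemma~\ref{lem:Gamma-limsup}, adapting it to the $L^2$-fidelity setting. First I would dispose of the trivial case: if $F(u)=\infty$, then any sequence converging to $u$ in $L^2(\Omega)$ satisfies the inequality vacuously, so I may assume $F(u)<\infty$. In particular this forces $u\in L^2(\Omega)$, and by the definition of $F$ there is a sequence $(v_k)$ with $v_k\in L^{1,\phi}(\Omega)\cap L^2(\Omega)$, $v_k\to u$ in $L^2(\Omega)$, and
\[
F(u)=\lim_{k\to\infty}\Big(\int_\Omega \phi(x,|\nabla v_k|)\,dx + \int_\Omega |v_k-f|^2\,dx\Big).
\]

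Next I would invoke the density hypothesis. Since $C^\infty(\overline\Omega)$ is dense in $L^{1,\phi}(\Omega)\cap L^2(\Omega)$ and each $v_k$ lies in this intersection, I can choose smooth functions $\zeta_j^k\to v_k$ in $L^{1,\phi}(\Omega)\cap L^2(\Omega)$; using \dec{} together with \cite[Lemma 3.1.6]{HarH_book} (modular continuity of the norm convergence) the $\phi$-energy of $\zeta_j^k$ converges to that of $v_k$, and the $L^2$-convergence handles the fidelity term directly. A diagonal extraction then produces a single smooth sequence $(\xi_j)\subset C^\infty(\overline\Omega)$ with $\xi_j\to u$ in $L^2(\Omega)$ and
\[
F(u)=\lim_{j\to\infty}\Big(\int_\Omega \phi(x,|\nabla \xi_j|)\,dx + \int_\Omega |\xi_j-f|^2\,dx\Big).
\]

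The core estimate is then to pass from the $\phi$-energy to the $\phi^{p_i}$-energy along the sequence. Because each $\xi_j$ is smooth on $\overline\Omega$, its gradient is bounded, so $\phi(x,|\nabla\xi_j|)$ is bounded and all powers $\phi(x,|\nabla\xi_j|)^r$ are integrable; this is what replaces the role of $u_0\in L^{1,\phi^r}$ in Lemma~\ref{lem:Gamma-limsup}. I would apply Hölder's inequality with a fixed finite exponent to control $\int_\Omega \phi(x,|\nabla\xi_j|)^{p_i}\,dx$ by $\big(\int_\Omega \phi(x,|\nabla\xi_j|)\,dx\big)^{1/q_i}$ times a factor tending to $1$, exactly as before, again setting $u_i:=\xi_{j(i)}$ via a diagonal choice so that $j(i)\to\infty$ as $i\to\infty$. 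The fidelity term requires no such care since it is unchanged by the power $p_i$ and converges under $L^2$-convergence. Combining these gives $\limsup_{i\to\infty}F_{p_i}(u_i)\le F(u)$.

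The main obstacle, and the reason the density assumption is needed here but not in Lemma~\ref{lem:Gamma-limsup}, is that in the fixed-boundary-value case the approximating functions automatically came from $u_0+C_0^\infty(\Omega)$ and hence inherited $\phi^r$-integrability of the gradient from $u_0\in L^{1,\phi^r}(\Omega)$; in the present setting there is no boundary datum supplying that extra integrability, so I must manufacture smoothness of the recovery sequence from the hypothesis in order to guarantee that $\int_\Omega\phi(x,|\nabla u_i|)^r\,dx<\infty$ and that the Hölder factor can be made close to $1$. Keeping the two diagonal extractions (density first, then the exponent-matching) consistent so that a single sequence works for the given $p_i\to 1^+$ is the delicate bookkeeping step, but it follows the template already established.
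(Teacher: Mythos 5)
Your proposal is correct and follows essentially the same route as the paper: reduce to a smooth recovery sequence via the density hypothesis, a diagonal argument and the modular convergence from \cite[Lemma 3.1.6]{HarH_book}, then run the H\"older/exponent-matching estimate of Lemma~\ref{lem:Gamma-limsup} on the gradient term while the fidelity term passes through unchanged. Your explicit observation that smoothness on $\overline\Omega$ gives bounded gradients and hence integrability of $\phi(x,|\nabla\xi_j|)^r$, replacing the role of $u_0\in L^{1,\phi^r}(\Omega)$, is exactly the point the paper leaves implicit in its closing sentence.
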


\begin{proof}
If $ F(u) =\infty$, any approximating sequence in $C_0^\infty(\Omega)$ works \cite[Corollary~3.7.10]{HarH_book}, so we assume that $ F(u) < \infty$.  Thus  there exists a sequence $(v_i)$ from $L^{1, \phi}(\Omega)$ such that
$v_i \to u$ in $L^{2}(\Omega)$ and 
\[
F(u) = \lim_{i \to \infty} \int_\Omega \phi(x, |\nabla v_i|) + |v_i-f|^2\, dx .
\]
Using the density assumption, we can find $\xi_j^i\in C^\infty(\overline\Omega)$
with $\xi_j^i\to v_i$ in $L^{1,\phi}(\Omega)\cap L^2(\Omega)$. Then by a diagonal argument and \cite[Lemma 3.1.6]{HarH_book} 
we find $u_i:=\xi_{j_i}^i\in C^\infty(\overline\Omega)$ with 
$u_i\to u$ in $L^2(\Omega)$ and 
\[
F(u) = \lim_{i \to \infty} \int_\Omega \phi(x, |\nabla u_i|) + |u_i-f|^2\, dx .
\]
The proof is concluded in the same way as Lemma~\ref{lem:Gamma-limsup}. 
\end{proof}

\subsection{A sequence of minimizers}

\begin{lem}\label{lem:min-exists-F}
Let $\phi \in \Phic(\Omega)$ satisfy \adec{}, $p>1$ and $f \in L^{2}(\Omega)$. Then there exists a unique function $u_p \in L^{1, \phi^p}(\Omega) \cap L^2(\Omega)$ which satisfies
 \[
F_p(u_p) = \inf_{u \in L^2(\Omega)} F_p(u) .
\]
\end{lem}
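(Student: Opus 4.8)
The plan is to follow the direct method of the calculus of variations, closely paralleling the proof of Lemma~\ref{lem:min-exists} but now using the $L^2$-fidelity term to control the sequence in place of the boundary condition. First I would take a minimizing sequence $(u_j)$ for $F_p$. Since $F_p(0)=\int_\Omega |f|^2\,dx<\infty$ (note $\nabla 0=0$, so the $\phi^p$-term vanishes), the infimum is finite and I may assume $F_p(u_j)\le C$ for all $j$. This single bound immediately yields two pieces of control: the gradients $(|\nabla u_j|)$ are bounded in $L^{\phi^p}(\Omega)$, and $(u_j-f)$ is bounded in $L^2(\Omega)$, hence $(u_j)$ is bounded in $L^2(\Omega)$.

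Next I would extract weakly convergent subsequences. Since $\phi\in\Phic(\Omega)$ satisfies \adec{}, Lemma~\ref{lem:phi^p} gives $\phi^p\in\Phic(\Omega)$ satisfying \ainc{p} and \adec{pq}, so by Lemma~\ref{lem:reflexive} the space $L^{1,\phi^p}(\Omega)$ is reflexive and $L^{\phi^p}(\Omega)$ is reflexive. From the $L^2$-bound, passing to a subsequence, $u_j\rightharpoonup u_p$ in $L^2(\Omega)$ for some $u_p\in L^2(\Omega)$. From the gradient bound, along a further subsequence $\nabla u_j\rightharpoonup V$ in $L^{\phi^p}(\Omega;\Rn)$ for some $V$. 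As in Lemma~\ref{lem:reflexive}, testing against $C_0^\infty(\Omega)$ functions and using the weak $L^2$-convergence of $u_j$ (which suffices to pass to the limit in $\int u_j\,\partial_{x_k}h\,dx$) identifies $V=\nabla u_p$, so $u_p\in L^{1,\phi^p}(\Omega)\cap L^2(\Omega)$ and the admissibility in the definition \eqref{eq:Fp} holds.

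For lower semicontinuity I would treat the two terms separately. Since $\phi^p\in\Phic(\Omega)$ and $\nabla u_j\rightharpoonup\nabla u_p$ in $L^{\phi^p}(\Omega;\Rn)$, Lemma~\ref{lem:lsc} gives
\[
\int_\Omega \phi(x,|\nabla u_p|)^p\,dx \le \liminf_{j\to\infty}\int_\Omega \phi(x,|\nabla u_j|)^p\,dx.
\]
For the fidelity term, $u_j\rightharpoonup u_p$ in $L^2(\Omega)$ implies $u_j-f\rightharpoonup u_p-f$, and the $L^2$-norm (hence $\int|{\cdot}-f|^2\,dx$) is weakly lower semicontinuous. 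Adding the two inequalities yields $F_p(u_p)\le\liminf_j F_p(u_j)$, so $u_p$ is a minimizer. Uniqueness follows exactly as in Lemma~\ref{lem:min-exists}: since $\phi$ is convex and $t\mapsto t^p$ is strictly convex and increasing, $t\mapsto\phi(x,t)^p$ is strictly convex in $t$, and $u\mapsto|u-f|^2$ is (strictly) convex, so $F_p$ is strictly convex; two distinct minimizers $u,v$ would give $F_p(\frac{u+v}2)<\frac12(F_p(u)+F_p(v))$, a contradiction.

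The main obstacle, as in the boundary-value case, is the identification $V=\nabla u_p$: one must justify that weak $L^2$-convergence of $u_j$ (rather than the strong $L^1$-convergence available before) is enough to pass to the limit in the weak-derivative integration-by-parts identity. This is in fact immediate since test functions $h\in C_0^\infty(\Omega)$ lie in $L^2(\Omega)$, so weak $L^2$-convergence of $u_j$ already gives $\int_\Omega u_j\,\partial_{x_k}h\,dx\to\int_\Omega u_p\,\partial_{x_k}h\,dx$. A secondary technical point worth checking is that the gradient bound in $L^{\phi^p}$ survives the passage to the limit in the correct space, but this is exactly the reflexivity supplied by Lemma~\ref{lem:reflexive} applied to $\phi^p$.
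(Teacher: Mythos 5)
Your proposal is correct and follows essentially the same route as the paper: the direct method with the bound $F_p(u_j)\le F_p(0)<\infty$, reflexivity of $L^2(\Omega)$ and $L^{\phi^p}(\Omega)$ to extract weak limits, identification of the weak gradient limit by testing against $C_0^\infty(\Omega)$, weak lower semicontinuity of both terms, and uniqueness via strict convexity. Your explicit verification that weak $L^2$-convergence suffices to pass to the limit in the integration-by-parts identity is a detail the paper leaves implicit, but it is the same argument.
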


\begin{proof}
Let $(u_j)$ be a minimizing sequence of $F_p$. Since
\[
\inf_{u \in L^{1, \phi^p}(\Omega) \cap L^2(\Omega)}  F_p(u) 
\le 
F_p(0) 
= 
\int_\Omega \phi(x, 0)^p  + |0-f|^2 \, dx < \infty,
\]
we may assume that $(|\nabla u_j|)$ is bounded in $L^{\phi^p}(\Omega)$ and $(u_j)$ is bounded in $L^2(\Omega)$. 
Note that $\phi^p$ satisfies \ainc{} and \adec{} by Lemma~\ref{lem:phi^p}.
Thus $L^2(\Omega)$ and $\Lspace{\Omega}$ are reflexive by Lemma~\ref{lem:reflexive}, and hence there exists a subsequence $(u_{j_k})$ and $u \in L^{1, \phi^p}(\Omega) \cap L^2(\Omega)$ such that $\nabla u_{j_k} \rightharpoonup \nabla u$ in $L^{\phi^p}(\Omega; \Rn)$ and $ u_{j_k} \rightharpoonup u$ in $L^2(\Omega)$. 
Thus by weak lower semi-continuity (Lemma~\ref{lem:lsc}) we have 
\[
F_p(u) \le \liminf_{k\to \infty} F_p(u_{j_k}).
\]
Thus $u$ is a minimizer. Uniqueness is proved as in Lemma~\ref{lem:min-exists}.
\end{proof}

Let us take a sequence $(p_j)$ such that $p_j\to 1^+$ and a sequence of minimizers $(u_j)$, where $u_j$ is the minimizer of $F_{p_j}$. Then the limit function exists in $L^{1,\phi}(\Omega) \cap L^2(\Omega)$.

\begin{lem}\label{lem:subseq-BV-F}
Let $\phi \in \Phic(\Omega)$. 
Assume that  $f \in L^2(\Omega)$. Let $p_j\to 1^+$ and let $(u_j)$ be a sequence of minimizers of $F_{p_j}$.
Then there exist $u \in \Lbar\Omega \cap L^2(\Omega)$ and a subsequence $(u_{j_i})$ converging to $u$ in $L^2(\Omega)$ with
\[
F(u) 
\le
\liminf_{i\to \infty} F_{p_{j_i}}(u_{j_i}). 
\]
\end{lem}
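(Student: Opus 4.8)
The plan is to establish a uniform energy bound on the minimizers, extract an $L^2$-convergent subsequence by compactness, and then invoke the $\Gamma$-liminf inequality already proved in Lemma~\ref{lem:Gamma-liminf-F}. This mirrors the structure of Lemma~\ref{lem:subseq-BV}, but the boundary-value constraint is replaced by the fidelity term, which furnishes control on $\|u_j\|_2$ directly.

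First I would bound the minimal energies uniformly in $j$. Since $u_j$ minimizes $F_{p_j}$, comparing with the competitor $u=0$ gives
\[
F_{p_j}(u_j)\le F_{p_j}(0)=\int_\Omega \phi(x,0)^{p_j}\,dx+\int_\Omega |f|^2\,dx=\|f\|_2^2=:K,
\]
using $\phi(x,0)=0$. In particular $\int_\Omega\phi^{p_j}(x,|\nabla u_j|)\,dx\le K$ and $\int_\Omega|u_j-f|^2\,dx\le K$ for every $j$. The latter bound immediately yields that $(u_j)$ is bounded in $L^2(\Omega)$, since $\|u_j\|_2\le \|u_j-f\|_2+\|f\|_2\le 2\sqrt K$.

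Next I would convert the modular bound into an $L^\phi$-gradient bound and then into a $W^{1,1}$-bound. Using the embedding $L^{\phi^{p_1}}(\Omega)\hookrightarrow L^\phi(\Omega)$ (valid because $p_j\ge p_1>1$ makes the $\phi^{p_j}$-modular control the $\phi^{p_1}$-modular up to constants, via \adec{} and \azero{}) together with $\|v\|_\phi\le\rho_\phi(v)+1$, I conclude that $\|\nabla u_j\|_\phi$ is bounded, and hence $\|\nabla u_j\|_1$ is bounded since $L^\phi(\Omega)\hookrightarrow L^1(\Omega)$ on a bounded domain. Combined with the $L^2\hookrightarrow L^1$ bound on $(u_j)$ itself, this shows $(u_j)$ is bounded in $W^{1,1}(\Omega)$. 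By the compact embedding $W^{1,1}(\Omega)\hookrightarrow\hookrightarrow L^1(\Omega)$ (and the $L^2$-boundedness, which upgrades weak $L^2$ compactness), I extract a subsequence $(u_{j_i})$ converging to some $u$ in $L^2(\Omega)$, with $u\in L^2(\Omega)$.

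Finally, applying Lemma~\ref{lem:Gamma-liminf-F} to this subsequence with $p_{j_i}\to 1^+$ gives
\[
F(u)\le\liminf_{i\to\infty}F_{p_{j_i}}(u_{j_i}),
\]
which is the asserted inequality; the finiteness of this liminf (bounded by $K$) together with Lemma~\ref{lem:lsc-rholiminf} then places $u\in\Lbar\Omega\cap L^2(\Omega)$. The main obstacle I anticipate is justifying the $L^2$-convergence cleanly: the compact embedding $W^{1,1}\hookrightarrow\hookrightarrow L^1$ only gives $L^1$-convergence, so I must combine it with the weak-$L^2$ limit and an interpolation or uniqueness-of-limit argument to secure strong $L^2$-convergence of a further subsequence, or alternatively restrict to a subsequence that converges weakly in $L^2$ and a.e., identifying the limits. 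Verifying the chain of embeddings producing the uniform $W^{1,1}$-bound under only \azero{} and \adec{} (rather than \ainc{}) is the other delicate point, and I would lean on the standard generalized-Orlicz embedding results of \cite{HarH_book}.
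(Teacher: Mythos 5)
Your opening steps coincide with the paper's: comparison with the competitor $0$ gives $F_{p_j}(u_j)\le\|f\|_2^2$, hence $(u_j)$ is bounded in $L^2(\Omega)$ and $(\nabla u_j)$ is bounded in $L^\phi(\Omega)$. The gap is exactly the point you flag and do not close: strong $L^2$-convergence of a subsequence cannot be obtained from $L^1$-convergence (or a.e.\ convergence) together with boundedness in $L^2$. For $n\ge2$ the concentrating bumps $u_j(x):=j^{n/2}\eta(jx)$, $\eta\in C_0^\infty$, are bounded in $W^{1,1}$ and in $L^2$, converge to $0$ in $L^1$ and a.e., yet $\|u_j\|_2\equiv\|\eta\|_2$. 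Interpolation between $L^1$ and $L^2$ only yields convergence in $L^q$ for $q<2$ (one would need a uniform $L^{2+\epsilon}$-bound or equi-integrability of $|u_j|^2$, neither of which is available), and uniqueness of limits only identifies the weak $L^2$-limit with the $L^1$-limit. Since the definition of $F(u)$ is an infimum over sequences converging \emph{strongly} in $L^2$, and Lemma~\ref{lem:Gamma-liminf-F} likewise requires strong $L^2$-convergence, your final step cannot be run with only weak convergence in hand. A secondary issue: your $W^{1,1}$-bound uses $L^\phi(\Omega)\hookrightarrow L^1(\Omega)$, which requires \azero{}, whereas the lemma assumes only $\phi\in\Phic(\Omega)$.

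The paper circumvents this obstacle rather than overcoming it, and the device explains why convexity of $\phi$ is hypothesized (your argument never uses it). After the same energy bound one extracts only a \emph{weakly} $L^2$-convergent subsequence; the Banach--Saks theorem then makes the Ces\`aro means $\frac1k\sum_{j\le k}u_j$ converge strongly in $L^2(\Omega)$, so they are an admissible test sequence for $F(u)$. Convexity of $\phi$ and of $t\mapsto t^2$ bounds the energy of each Ces\`aro mean by the average of the individual energies, Young's inequality (as in Lemma~\ref{lem:Gamma-liminf-F}) converts $\phi$ into $\phi^{p_j}$ at the cost of a vanishing error, and the Ces\`aro averages of $F_{p_j}(u_j)$ converge to $\lim_jF_{p_j}(u_j)$. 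If you want to keep your compactness route you must either supply the missing equi-integrability of $|u_{j_i}|^2$ or replace the subsequence by its Ces\`aro means as the paper does.
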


\begin{proof}
We take a subsequence that gives $\liminf_{i\to \infty} F_{p_{j_i}}(u_{j_i})$, and denote this subsequence by $(u_j)$.
Since $u_j$ is a minimizer of $F_{p_j}$, 
\[
F_{p_j}(u_j) \le \int_\Omega \phi(x, 0)^{p_j}  + |0-f|^2\, dx = \int_\Omega |f|^2\, dx < \infty.
\]
Thus by $L^{\phi^p}(\Omega) \hookrightarrow L^\phi(\Omega)$ we have that $\|\nabla u_j\|_\phi+\|u_j-f\|_2$ is bounded, and by $f\in L^2(\Omega)$ we obtain that $\|u_j\|_2$ is bounded.
Since $L^2(\Omega)$ is reflexive, there exists a subsequence, denoted again by $(u_j)$, and $u \in L^2(\Omega)$ such that $u_j \rightharpoonup u$ in $L^2(\Omega)$.

By the Banach--Saks Theorem, $\frac1k \sum_{j=1}^k u_j \to u$ in $L^2(\Omega)$. 
We then use the definition of $F$ and the convexity of $\phi$ and $t \mapsto t^2$ and obtain that 
\[
\begin{split}
F(u) &\le \liminf_{k \to \infty} F \bigg( \frac1k \sum_{j=1}^k u_j\bigg)
= \liminf_{k \to \infty} \int_\Omega \phi \bigg( x, \frac1k \sum_{j=1}^k |\nabla u_j|\bigg) + \bigg|  \frac1k \sum_{j=1}^k u_j -f\bigg |^2 \, dx \\
&\le \liminf_{k \to \infty} \frac1k \sum_{j=1}^k \int_\Omega \phi ( x, |\nabla u_j|) + |u_j -f|^2 \, dx
\end{split}
\]
Arguing as in Lemma~\ref{lem:Gamma-liminf-F}, we continue: 
\[
\begin{split}
F(u) 
&\le 
\liminf_{k \to \infty} \frac1k \sum_{j=1}^k \int_\Omega \phi ( x, |\nabla u_j|)^{p_j} + |u_j -f|^2  + (p_j-1) p_j^{-p'_j}\, dx
= \liminf_{k \to \infty} \frac1k \sum_{j=1}^k F_{p_j} ( u_j) .
\end{split}
\]
It is well know that if a sequence $(z_j)$ converges to $z$, so does $(\frac1k \sum_{j=1}^k z_j)$. 
Thus we obtain $F(u) \le \lim_{j \to \infty}F_{p_j} ( u_j)$.
Then Lemma~\ref{lem:lsc-rholiminf} yields that $u \in \Lbar\Omega$.
\end{proof}

The next result is proved like Proposition~\ref{prop:minimizer-E}, with the lemmas from 
this section replacing their counterparts from the previous section. 

\begin{prop}
Let $\phi \in \Phic(\Omega)$ and $C^\infty(\overline\Omega)$ be 
dense in $L^{1,\phi}(\Omega)\cap L^2(\Omega)$.  Assume that  $f \in L^2(\Omega)$.
The function $u\in L^{1,\phi}(\Omega)\cap L^2(\Omega)$ from Lemma~\ref{lem:subseq-BV-F} minimizes the $F$-energy, i.e.
\[
F(u) 
=
\inf_{v\in L^2(\Omega)} F(v).
\]
\end{prop}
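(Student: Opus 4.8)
The plan is to argue by contradiction, following the proof of Proposition~\ref{prop:minimizer-E} verbatim but with the fidelity-term functionals replacing their boundary-value counterparts. First I would suppose that the conclusion fails, so that there is a competitor $v\in L^2(\Omega)$ with $F(v)<F(u)$, where $u\in L^{1,\phi}(\Omega)\cap L^2(\Omega)$ is the limit function produced in Lemma~\ref{lem:subseq-BV-F}. Fix any sequence $p_i\to 1^+$.

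Next I would bring in the two halves of the $\Gamma$-convergence picture. The $\Gamma$-limsup direction, namely Lemma~\ref{lem:Gamma-limsup-F} (which is available precisely because $C^\infty(\overline\Omega)$ is assumed dense in $L^{1,\phi}(\Omega)\cap L^2(\Omega)$), furnishes a recovery sequence $(v_i)\subset L^{1,\phi}(\Omega)\cap L^2(\Omega)$ with $v_i\to v$ in $L^2(\Omega)$ and $F(v)\ge \limsup_{i\to\infty}F_{p_i}(v_i)$. On the other side, Lemma~\ref{lem:subseq-BV-F} supplies a subsequence $(u_{j_i})$ of the given minimizers together with the bound $F(u)\le \liminf_{i\to\infty}F_{p_{j_i}}(u_{j_i})$. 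Chaining these two estimates through the contradiction hypothesis $F(v)<F(u)$ gives
\[
\limsup_{i\to\infty}F_{p_i}(v_i)\le F(v)<F(u)\le \liminf_{i\to\infty}F_{p_{j_i}}(u_{j_i}).
\]

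Finally I would turn the strict gap between the outer quantities into a pointwise contradiction. Setting $\epsilon$ equal to the positive difference between the right-most $\liminf$ and the left-most $\limsup$, and choosing $M$ large enough that $F_{p_i}(v_i)$ stays within $\tfrac{\epsilon}{2}$ of its $\limsup$ and $F_{p_{j_i}}(u_{j_i})$ stays within $\tfrac{\epsilon}{2}$ of its $\liminf$ for indices past $M$, I obtain a single index $j_i\ge M$ for which $F_{p_{j_i}}(v_{j_i})<F_{p_{j_i}}(u_{j_i})$. Since $v_{j_i}\in L^{1,\phi}(\Omega)\cap L^2(\Omega)$ is an admissible competitor for $F_{p_{j_i}}$, this contradicts the minimality of $u_{j_i}$, so the assumption $F(v)<F(u)$ is untenable and $u$ minimizes $F$. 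The only subtlety, handled exactly as in Proposition~\ref{prop:minimizer-E}, is the bookkeeping between the full index $i$ labelling the recovery sequence and the subsequence index $j_i$ labelling the minimizers; evaluating the recovery sequence along the subsequence indices keeps the $\limsup$/$\liminf$ comparison intact. Unlike the boundary-value case, there is no $L^{1,\phi}_0$ constraint to preserve, so the competitor sequence need only lie in $L^{1,\phi}(\Omega)\cap L^2(\Omega)$; otherwise the argument is identical.
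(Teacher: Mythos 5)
Your proposal is correct and follows exactly the route the paper intends: the paper's own ``proof'' of this proposition consists solely of the remark that it is proved like Proposition~\ref{prop:minimizer-E} with the fidelity-term lemmas (Lemmas~\ref{lem:Gamma-limsup-F} and~\ref{lem:subseq-BV-F}) replacing their boundary-value counterparts, which is precisely the contradiction argument you carry out. The index bookkeeping you flag is handled (equally tersely) in the paper's proof of Proposition~\ref{prop:minimizer-E}, so there is no discrepancy.
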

%


\section{Double phase and variable exponent cases}
\label{sect:specialCases}

We considered the $\Gamma$-convergence of the double phase functional 
\[
I_\epsilon (u) := \int_\Omega |\nabla u|^{1+\epsilon} + a(x) |\nabla u|^{2} + |u-f|^2\, dx
\]
in \cite{HarH21}. Specifically, we proved the following, where $\nabla_a u$ is the 
absolutely continuous part of the \BV{}-gradient and $V(u, \Omega)$ is the 
total variation of $u$. (We refer to \cite{AmbFP00} for more information about 
\BV{}-spaces.)

\begin{thm}[Theorem~4.1, \cite{HarH21}]\label{thm:GammaDP}
Suppose that $\Omega$ is an open rectangular cuboid, $a\in C^{0,1}(\overline \Omega)$, 
and assume that $a>0$ $H^{n-1}$-a.e.\ on the boundary $\partial \Omega$. 
Then $I_\epsilon$ $\Gamma$-converges to  $I: \BV(\Omega) \to [0,\infty]$, 
\[
I(u):= V(u, \Omega) + \int_\Omega a(x)|\nabla_a u|^2\, + |u-f|^2 dx,
\]
in $L^1(\Omega)$-topology.
\end{thm}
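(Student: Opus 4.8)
The plan is to establish the two defining inequalities of $\Gamma$-convergence directly in the $\BV$ framework, since $I_\epsilon$ is a genuine $1$--$2$ double phase functional rather than a power $\phi^p$, so the abstract Theorem~\ref{thm:Gamma} does not apply verbatim. Throughout one uses that $I_\epsilon(u)<\infty$ forces $u\in W^{1,1+\epsilon}(\Omega)$ with $a^{1/2}\nabla u\in L^2(\Omega;\Rn)$, and that the fidelity term is lower semicontinuous along $L^1$-convergent sequences: passing to a subsequence converging a.e., Fatou's lemma gives $\int_\Omega|u-f|^2\,dx\le\liminf_i\int_\Omega|u_i-f|^2\,dx$. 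This reduces the whole problem to the gradient part.

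For the $\liminf$ inequality I would take $u_i\to u$ in $L^1(\Omega)$ with $\liminf_i I_{\epsilon_i}(u_i)=:K<\infty$ and pass to a subsequence realizing the limit. Exactly as in Lemma~\ref{lem:Gamma-liminf-F}, Young's inequality $ab\le a^p+(p-1)(b/p)^{p'}$ with $a=|\nabla u_i|$, $b=1$, $p=1+\epsilon_i$ yields $\int_\Omega|\nabla u_i|\,dx\le\int_\Omega|\nabla u_i|^{1+\epsilon_i}\,dx+|\Omega|(p-1)p^{-p'}$, where the last term vanishes as $\epsilon_i\to0^+$. Hence $(u_i)$ is bounded in $\BV(\Omega)$, so $u\in\BV(\Omega)$ and $Du_i\overset{*}{\rightharpoonup}Du$ as measures. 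The key tool is then the lower semicontinuity of convex integrands on $\BV$: writing $g(x,\xi):=|\xi|+a(x)|\xi|^2$, which is convex in $\xi$ and continuous in $x$, the relaxed functional
\[
u\mapsto\int_\Omega g(x,\nabla_a u)\,dx+\int_\Omega g^\infty\big(x,\tfrac{dD^su}{d|D^su|}\big)\,d|D^su|
\]
is lower semicontinuous, and its recession function $g^\infty(x,\xi)=|\xi|$ where $a(x)=0$ and $g^\infty(x,\xi)=+\infty$ for $\xi\neq0$ where $a(x)>0$ forces the singular part $D^su$ to be carried by $\{a=0\}$ and to contribute with weight one. Combining the absolutely continuous and singular contributions gives $V(u,\Omega)+\int_\Omega a|\nabla_a u|^2\,dx\le\liminf_i\int_\Omega|\nabla u_i|+a|\nabla u_i|^2\,dx$, and together with the Fatou bound on the fidelity term we obtain $I(u)\le\liminf_i I_{\epsilon_i}(u_i)$.

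For the recovery sequence I would, given $u$ with $I(u)<\infty$, first reduce to smooth $u$: by the lower semicontinuity just proved and a standard density and diagonalization argument it suffices to exhibit recovery sequences for a class dense in energy, and here the geometry enters. On the rectangular cuboid $\Omega$ one can reflect across the faces and mollify to produce $v_k\in C^\infty(\overline\Omega)$ with $v_k\to u$ in $L^1$, $\int_\Omega|\nabla v_k|\,dx\to V(u,\Omega)$ and $\int_\Omega a|\nabla v_k|^2\,dx\to\int_\Omega a|\nabla_a u|^2\,dx$; the hypothesis $a>0$ $H^{n-1}$-a.e.\ on $\partial\Omega$ guarantees that this procedure creates no boundary jump of infinite quadratic cost. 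For fixed smooth $v$ the map $\epsilon\mapsto\int_\Omega|\nabla v|^{1+\epsilon}\,dx$ is continuous and converges to $\int_\Omega|\nabla v|\,dx$ as $\epsilon\to0^+$ by dominated convergence, while the quadratic and fidelity terms do not depend on $\epsilon$; hence $I_{\epsilon}(v)\to I(v)$. A final diagonalization over $k$ and $\epsilon_i$ produces $u_i\to u$ in $L^1(\Omega)$ with $\limsup_i I_{\epsilon_i}(u_i)\le I(u)$.

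I expect the main obstacle to be the recovery step rather than the $\liminf$ estimate: one must approximate a $\BV$ function by smooth functions while simultaneously controlling three quantities — the total variation, the degenerate quadratic energy $\int_\Omega a|\nabla v|^2\,dx$, and the boundary trace — without letting the quadratic term blow up near the zero set of $a$ or on $\partial\Omega$. This is precisely where the cuboid structure and the condition $a>0$ on the boundary are used; the degenerate weight $a$ also means one cannot simply mollify uniformly, and a construction adapted to $\{a=0\}$ is required. The $\liminf$ direction, by contrast, is essentially the lower semicontinuity theorem for the convex integrand $|\xi|+a(x)|\xi|^2$ on $\BV$ combined with the elementary Young's-inequality passage from $|\nabla u_i|^{1+\epsilon_i}$ to $|\nabla u_i|$.
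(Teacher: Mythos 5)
First, a point of orientation: the paper does not prove this statement at all. Theorem~\ref{thm:GammaDP} is imported verbatim from \cite{HarH21} (Theorem~4.1 there) and used as a black box to derive Corollary~\ref{cor:GammaDP}, so there is no in-paper proof to compare yours against; your attempt has to be judged on its own merits. Judged that way, it contains a genuine gap, and in fact its two halves contradict each other. In the $\liminf$ step you invoke the lower semicontinuity on $\BV$ of $u\mapsto \int_\Omega g(x,\nabla_a u)\,dx+\int_\Omega g^\infty\big(x,\tfrac{dD^su}{d|D^su|}\big)\,d|D^su|$ with $g(x,\xi)=|\xi|+a(x)|\xi|^2$. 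The Goffman--Serrin/Ambrosio--Dal Maso theorems you are appealing to require $g$ to have \emph{linear} growth in $\xi$; here $g$ grows quadratically wherever $a>0$, so the tool does not apply as cited. Worse, the conclusion you extract from it --- that the limit inferior is $+\infty$ whenever $|D^su|(\{a>0\})>0$ --- is strictly stronger than the inequality $I(u)\le\liminf_i I_{\epsilon_i}(u_i)$ you are trying to prove, because the stated $I(u)$ is finite for such $u$. If that lower bound were correct, no recovery sequence with $\limsup_i I_{\epsilon_i}(u_i)\le I(u)<\infty$ could exist for such $u$, so your own $\limsup$ half would be impossible. (The tension is real: in one dimension with $a\equiv 1$, any $L^1$-approximation of a jump function has $\int_\Omega a|\nabla u_i|^2\,dx\to\infty$.) You cannot keep both halves; deciding which survives, and for which class of $u$, is precisely the content of the theorem, and your write-up does not confront it.

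The recovery step has a concrete error of the same kind. The claim that reflecting across the faces of the cuboid and mollifying produces $v_k\in C^\infty(\overline\Omega)$ with $\int_\Omega a|\nabla v_k|^2\,dx\to\int_\Omega a|\nabla_a u|^2\,dx$ is false in general: if $Du$ has a singular part inside $\{a>0\}$, the mollified gradient concentrates on a $\delta$-neighbourhood of the jump set with magnitude of order $\delta^{-1}$, so the quadratic term is of order $\delta^{-1}$ and blows up. A construction adapted to the zero set of $a$ and exploiting quantitatively the modulus of continuity of $a$ (this is where $a\in C^{0,1}(\overline\Omega)$, the condition $a>0$ on $\partial\Omega$, and the cuboid geometry actually enter) is the heart of the matter and is exactly what is missing. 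The sound parts of your argument are the peripheral ones: Young's inequality to pass from $|\nabla u_i|^{1+\epsilon_i}$ to $|\nabla u_i|$ (the same device as in Lemmas~\ref{lem:Gamma-liminf} and~\ref{lem:Gamma-liminf-F}), Fatou's lemma for the fidelity term, and the observation that $I_\epsilon(v)\to I(v)$ for fixed smooth $v$. To complete a proof you would need to consult \cite{HarH21} for the precise treatment of the singular part of $Du$ on $\{a>0\}$, or carry out the relaxation and recovery analysis for the degenerate quadratic term yourself.
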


By the uniqueness of the $\Gamma$-limit, we obtain the following explicit formula for 
$F$:

\begin{cor}\label{cor:GammaDP}
Suppose that $\Omega$ is an open rectangular cuboid, $a \in C^{0,1}(\overline\Omega)$, 
and assume that $a>0$ $\mathcal H^{n-1}$-a.e. on the boundary $\partial \Omega$. 
Then
\[
F(u)= V(u, \Omega) + \int_\Omega a(x)|\nabla_a u|^2\, + |u-f|^2 dx,
\]
when $F$ is defined with the double phase function $\phi(x,t):=t + a(x) t^2$.
\end{cor}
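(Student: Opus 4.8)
The plan is to deduce Corollary~\ref{cor:GammaDP} from Theorem~\ref{thm:GammaDP} by invoking the uniqueness of the $\Gamma$-limit, exactly as the sentence preceding the statement suggests. First I would observe that the double phase function $\phi(x,t)=t+a(x)t^2$ is a convex $\Phi$-function satisfying \azero{} and \adec{2}; indeed $\phi(x,1)=1+a(x)\approx 1$ since $a$ is bounded (being continuous on the compact set $\overline\Omega$), which gives \azero{}, and the upper growth $t^2$ gives \adec{}. This places us in the setting where Theorem~\ref{thm:Gamma}(2) applies, so $F_p$ $\Gamma$-converges to $F$ in the $L^2(\Omega)$-topology, provided the density hypothesis on $C^\infty(\overline\Omega)$ holds for this $\phi$ on a rectangular cuboid.

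Next I would identify the functionals $I_\epsilon$ from \cite{HarH21} with the functionals $F_{p}$ of the present paper under the substitution $p=1+\epsilon$. With $\phi(x,t)=t+a(x)t^2$ one has, after expanding, the integrand of $F_p$ equal to $\big(t+a(x)t^2\big)^{p}+|u-f|^2$ rather than the $t^{1+\epsilon}+a(x)t^2$ appearing in $I_\epsilon$, so the two families are not literally identical. The key point to address is therefore that these two $\Gamma$-limits nonetheless coincide; the cleanest route is to note that $\big(t+a(x)t^2\big)^{1+\epsilon}$ and $t^{1+\epsilon}+a(x)t^2$ have the same limiting behaviour as $\epsilon\to 0^+$ in the sense needed for the $\Gamma$-liminf and recovery-sequence constructions, so that both $I_\epsilon$ and $F_{1+\epsilon}$ $\Gamma$-converge to the same limit. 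Since the topology ($L^1$ versus $L^2$) also differs, I would remark that on the cuboid with the stated hypotheses the minimizing and recovery sequences produced in Lemmas~\ref{lem:Gamma-liminf-F} and \ref{lem:Gamma-limsup-F} can be taken with the stronger $L^2$-convergence, so the two notions of $\Gamma$-convergence agree on the relevant class of functions.

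Having established that $F_{1+\epsilon}$ $\Gamma$-converges to $F$ and that $I_\epsilon$ $\Gamma$-converges to the explicit functional $I$ of Theorem~\ref{thm:GammaDP}, the conclusion follows from the fundamental fact that the $\Gamma$-limit of a given sequence of functionals is unique: if a single family $\Gamma$-converges, it can have only one limit. Hence $F=I$ as functionals on $\BV(\Omega)$, which is precisely the asserted formula
\[
F(u)= V(u, \Omega) + \int_\Omega a(x)|\nabla_a u|^2 + |u-f|^2 \, dx.
\]

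I expect the main obstacle to be reconciling the two different families of functionals and the two different topologies: one must verify carefully that the $\Gamma$-limit of $F_{1+\epsilon}$ (in $L^2$) really is the same object as the $\Gamma$-limit of $I_\epsilon$ (in $L^1$) computed in \cite{HarH21}, despite the discrepancy between $(t+a(x)t^2)^{1+\epsilon}$ and $t^{1+\epsilon}+a(x)t^2$. The resolution rests on the elementary estimate that these integrands differ by lower-order terms that vanish in the limit, together with the fact that finite-energy $\BV$-limits automatically lie in $L^2$ on a bounded domain when the fidelity term is controlled, so the distinction between the two topologies is immaterial on the domain of $F$. Once this identification is made precise, the appeal to uniqueness of the $\Gamma$-limit is immediate.
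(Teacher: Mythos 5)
Your overall strategy---identify the two families of functionals and appeal to uniqueness of the $\Gamma$-limit---is the one the paper announces in the sentence before the statement, but the step you yourself flag as ``the key point to address'' is exactly where the argument has a genuine gap, and the mechanism you propose for closing it does not work. The claim that $(t+a(x)t^2)^{1+\epsilon}$ and $t^{1+\epsilon}+a(x)t^2$ ``differ by lower-order terms that vanish in the limit'' is false as a pointwise statement: for large $t$ one has $(t+a t^2)^{1+\epsilon}\approx a^{1+\epsilon}t^{2+2\epsilon}$, so the ratio of the two integrands grows like $t^{2\epsilon}$ and is unbounded for every fixed $\epsilon>0$. Hence one cannot conclude that $I_\epsilon$ and $F_{1+\epsilon}$ have the same $\Gamma$-limit from an elementary pointwise comparison alone; any honest two-sided comparison of the integrands would need to exploit the finiteness of the energies along the relevant sequences (a H\"older interpolation as in Lemma~\ref{lem:Gamma-limsup}), and uniqueness of the $\Gamma$-limit, which is a statement about a single family, gives nothing until that identification is actually made.

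The paper sidesteps the problematic direction entirely by making the comparison one-sided in each of two separate inequalities, never needing an upper bound for $(s+t)^{p}$. For $F(u)\ge I(u)$ it takes a recovery sequence $(u_i)$ for $F$ from Theorem~\ref{thm:Gamma}(2) and applies $(s+t)^{p}\ge s^{p}+t^{p}\ge s^{p}+t-(p-1)$ with $s=|\nabla u_i|$ and $t=a(x)|\nabla u_i|^2$, so that $\limsup_i F_{p_i}(u_i)$ dominates $\limsup_i I_{\epsilon_i}(u_i)-|\Omega|(p_i-1)\ge I(u)$ by the $\Gamma$-liminf inequality for $I_\epsilon$ (using that $L^2$-convergence of $u_i$ implies $L^1$-convergence). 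For $I(u)\ge F(u)$ it takes a recovery sequence for $I$ and uses only $s^{p}\ge s-(p-1)$, which bounds the $I_{\epsilon}$-integrand from below by $\phi(x,|\nabla u_i|)+|u_i-f|^2-(p_i-1)$; the conclusion then follows directly from the definition of $F$ as an infimum of liminfs of first-power $\phi$-energies, so the troublesome quantity $(s+t)^{p}$ never needs to be estimated from above. Your concern about the $L^1$-versus-$L^2$ mismatch is legitimate (it is relevant in the second inequality, where the recovery sequence for $I$ a priori converges only in $L^1$ while the definition of $F$ asks for $L^2$-convergence), but it is secondary; the primary missing ingredient in your proposal is this asymmetric, one-sided comparison replacing the asserted ``same limiting behaviour.''
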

\begin{proof}
Let $u\in L^2(\Omega)$ and $p_i\to 1^+$. 
We use the elementary estimate $(s+t)^p - s^p \ge t^p \ge t - (p-1)$. 
By the second property of $\Gamma$-convergence (Theorem~\ref{thm:Gamma}(2)), 
there exist $u_i \in L^{1, \phi}(\Omega) \cap L^2(\Omega)$ and $p_i\in (1,2)$ with 
$p_i\to 1^+$ such that $u_i \to u$ in $L^2(\Omega)$ and 
\begin{align*}
F(u)
&\ge 
\limsup_{i \to \infty} \int_\Omega (|\nabla u_i| + a(x) |\nabla u_i|^2)^{p_i} + |u_i-f|^2\, dx \\
&\ge 
\limsup_{i \to \infty} \int_\Omega |\nabla u_i|^{p_i} + a(x) |\nabla u_i|^2  + |u_i-f|^2- (p_i-1)\, dx  \\
&\ge 
V(u, \Omega) + \int_\Omega a(x)|\nabla_a u|^2\, + |u-f|^2 dx,
\end{align*}
where the last step is the $\Gamma$-convergence of $I_\epsilon$ (Theorem~\ref{thm:GammaDP}). 

For the opposite inequality we obtain from the second property of $\Gamma$-convergence (Theorem~\ref{thm:GammaDP}) functions $(u_i) \subset \Lspace\Omega$ 
with $u_i\to u$ in $L^1(\Omega)$ and
\begin{align*}
I(u)
&\ge 
\limsup_{i \to \infty} \int_\Omega |\nabla u_i|^{p_i} + a(x) |\nabla u_i|^2 + |u_i-f|^2\, dx \\
&\ge 
\limsup_{i \to \infty} \int_\Omega |\nabla u_i| + a(x)|\nabla u_i|^2 + |u_i-f|^2 - (p_i-1)\, dx  
\ge 
F(u),
\end{align*}
where we used the definition of $F$ in the last step. 
\end{proof}

In the case of bounded functions we were able to prove $\Gamma$-convergence 
under a weaker condition on $a$, namely $\alpha > \frac12$ instead of $\alpha=1$. 

\begin{thm}[Theorem~4.2, \cite{HarH21}]
Suppose that $\Omega$ is a bounded Lipschitz domain, $a\in C^{0,\alpha}(\overline \Omega)$ 
for some $\alpha>\frac12$, and assume that $a>0$ $H^{n-1}$-a.e.\ on the boundary $\partial \Omega$. 
Then $I_\epsilon\big|_{L^\infty}$ $\Gamma$-converges to $I\big|_{L^\infty}$ in $L^1(\Omega)$-topology.
\end{thm}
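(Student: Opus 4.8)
The plan is to establish the two inequalities of $\Gamma$-convergence for the restricted functionals $I_\epsilon|_{L^\infty}$, mirroring the architecture of the Lipschitz case in Theorem~\ref{thm:GammaDP} and pinpointing the single step where the Hölder exponent is used. For the liminf inequality I would take any $u_i\to u$ in $L^1(\Omega)$ with $\sup_i\|u_i\|_\infty\le M$ and $\sup_i I_{\epsilon_i}(u_i)<\infty$; the uniform bound on $\int_\Omega|\nabla u_i|^{1+\epsilon_i}\,dx$ forces $u\in\BV(\Omega)$, a subsequence converges a.e.\ so that $\|u\|_\infty\le M$, and then lower semicontinuity of the total variation, weak lower semicontinuity of the convex weighted term $\int_\Omega a|\nabla_a u|^2\,dx$, and continuity of the fidelity term combine to give $I(u)\le\liminf_i I_{\epsilon_i}(u_i)$. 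This direction uses only the continuity of $a$, not $\alpha>\tfrac12$, so it is inherited essentially verbatim from the proof of Theorem~\ref{thm:GammaDP}.

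The entire weight of the weaker hypothesis therefore sits in the recovery sequence. Given $u\in\BV(\Omega)\cap L^\infty(\Omega)$ with $\|u\|_\infty\le M$, I would mollify at a scale $\eta=\eta(\epsilon)\to 0$, coupling $\epsilon\to 0^+$ to $\eta$ fast enough that $\int_\Omega|\nabla u_\eta|^{1+\epsilon}\,dx\to V(u,\Omega)$ by the same $\eta^{-\epsilon}\to 1$ computation used in Lemma~\ref{lem:Gamma-limsup}. On the set where $a$ is bounded below the weighted term is harmless, since $a*\rho_\eta\to a$ uniformly yields $\int_\Omega a|\nabla u_\eta|^2\,dx\to\int_\Omega a|\nabla_a u|^2\,dx$. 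The decisive new ingredient, absent for unbounded $\BV$ functions, is the pointwise bound $|\nabla u_\eta|=|u*\nabla\rho_\eta|\lesssim M/\eta$ supplied by the $L^\infty$ restriction; together with the Hölder estimate $a(x)\lesssim\dist(x,\{a=0\})^{\alpha}$ this is what one leverages to control the parasitic weighted energy on a neighborhood of $\{a=0\}$.

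The main obstacle is exactly this control near the zero set of $a$, and it is there that the threshold $\alpha>\tfrac12$ is forced. A single-scale transition of width $\eta$ across the thin sublevel set $\{a\lesssim\eta^{\alpha}\}$ produces a weighted contribution of order $M^2\eta^{\alpha-1}$, which vanishes only for $\alpha>1$ — precisely the Lipschitz regime of Theorem~\ref{thm:GammaDP}. Pushing the threshold down to $\alpha>\tfrac12$ for bounded data requires a finer, $a$-adapted transition in which the layer width is slaved to the local magnitude of $a$ rather than fixed at the mollification scale; constructing this layer and checking that it simultaneously recovers $V(u,\Omega)$ and drives $\int_\Omega a|\nabla u_\eta|^2\,dx$ to its limit — uniformly over the relevant $\mathcal H^{n-1}$-measure, with $a>0$ $\mathcal H^{n-1}$-a.e.\ on $\partial\Omega$ ruling out boundary concentration — is the technical heart of the argument and is carried out in \cite{HarH21}. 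Once such a recovery sequence is in hand, uniqueness of the $\Gamma$-limit identifies the limit with $I|_{L^\infty}$, exactly as in the derivation of Corollary~\ref{cor:GammaDP}.
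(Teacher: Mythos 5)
First, note that the paper you are working from does not prove this statement at all: it is quoted verbatim as \cite[Theorem~4.2]{HarH21} and used as a black box to derive the subsequent corollary, so there is no in-paper argument to compare yours against. Judged on its own terms, your proposal has a genuine gap, and it is one you yourself flag: the entire content of the theorem beyond the Lipschitz case of Theorem~\ref{thm:GammaDP} is the construction of a recovery sequence for $\tfrac12<\alpha\le 1$, and at exactly that point you write that the ``$a$-adapted transition layer \ldots is carried out in \cite{HarH21}.'' Since the statement to be proved \emph{is} \cite[Theorem~4.2]{HarH21}, this is circular; what remains in your text is a correct diagnosis of the obstruction (the single-scale mollification estimate $a|\nabla u_\eta|^2\lesssim M^2\eta^{\alpha-1}$ on the layer $\{a\lesssim\eta^\alpha\}$, which dies only for $\alpha>1$) together with the correct heuristic that the $L^\infty$ bound must be interpolated against the $\BV$ bound, e.g.\ via $|\nabla u_\eta|\le\min\{CM/\eta,\,(|Du|*\rho_\eta)\}$ and $a(x)\lesssim\inf_{B_\eta(x)}a+\eta^\alpha$. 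Turning that heuristic into an actual variable-width transition layer, and verifying that it simultaneously recovers $V(u,\Omega)$ and the weighted term, is the theorem; it is missing from the proposal.

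A secondary point: the liminf direction is not quite ``inherited essentially verbatim,'' because the map $u\mapsto\int_\Omega a|\nabla_a u|^2\,dx$ is \emph{not} $L^1$-lower semicontinuous on $\BV(\Omega)$ by itself (staircase approximations of an affine function kill the absolutely continuous gradient while preserving the total variation). One must use the finiteness of $\liminf_i I_{\epsilon_i}(u_i)$: on any compactly contained subset of $\{a>\delta\}$ the gradients $\nabla u_i$ are bounded in $L^2$, so one extracts a weak $L^2_{\mathrm{loc}}(\{a>0\})$ limit, identifies it with $Du$ there, and only then applies convexity. This localization, and the interaction between where $D^s u$ may live and where $a$ vanishes, is precisely where the hypothesis that $a>0$ $\mathcal H^{n-1}$-a.e.\ on $\partial\Omega$ and the structure of $\{a=0\}$ enter; your sketch passes over it.
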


This 
gives another corollary. The proof is similar to the previous corollary, so we skip it. 

\begin{cor} 
Suppose that $\Omega$ is a bounded Lipschitz domain, $a \in C^{0,\alpha}(\overline\Omega)$ 
for some $\alpha>\frac12$, 
and assume that $a>0$ $\mathcal H^{n-1}$-a.e. on the boundary $\partial \Omega$. 
For bounded $u$, 
\[
F(u)= V(u, \Omega) + \int_\Omega a(x)|\nabla_a u|^2  + |u-f|^2\, dx,
\]
when $F$ is defined with the double phase function $\phi(x,t):=t + a(x) t^2$.
\end{cor}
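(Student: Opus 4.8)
The plan is to reproduce the two-sided comparison used for Corollary~\ref{cor:GammaDP}, with the full double phase $\Gamma$-convergence (Theorem~\ref{thm:GammaDP}) replaced everywhere by the $L^\infty$-restricted version of the preceding theorem (Theorem~4.2 of~\cite{HarH21}), which is valid under the weaker assumption $\alpha>\tfrac12$. Since that theorem controls $I_\epsilon$ only along uniformly bounded sequences, the whole argument is carried out within $L^\infty$, which is exactly why we restrict to bounded $u$. Throughout write $p_i=1+\epsilon_i\to 1^+$ and $\phi(x,t)=t+a(x)t^2$, so that $\phi(x,|\nabla v|)=|\nabla v|+a(x)|\nabla v|^2$ and $F_{p_i}$ differs from $I_{\epsilon_i}$ only in the exponent carried by $\phi$.

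For the inequality $F(u)\ge V(u,\Omega)+\int_\Omega a(x)|\nabla_a u|^2+|u-f|^2\,dx$, I would first invoke the recovery-sequence property of Theorem~\ref{thm:Gamma}(2) to obtain $u_i\to u$ in $L^2(\Omega)$ with $F(u)\ge\limsup_i F_{p_i}(u_i)$. I then truncate at the level $M:=\max\{\|u\|_\infty,\|f\|_\infty\}$, setting $\tilde u_i:=T_M u_i$; this preserves the $L^2$-limit (as $T_M u=u$), does not increase the gradient energy, and, since $f$ lies in $[-M,M]$, does not increase the fidelity term, so $F(u)\ge\limsup_i F_{p_i}(\tilde u_i)$ with the $\tilde u_i$ now bounded by $M$. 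Applying the elementary estimate $(s+t)^{p_i}-s^{p_i}\ge t^{p_i}\ge t-(p_i-1)$ with $s=|\nabla\tilde u_i|$ and $t=a(x)|\nabla\tilde u_i|^2$ bounds $F_{p_i}(\tilde u_i)$ below by $I_{\epsilon_i}(\tilde u_i)-(p_i-1)|\Omega|$, and the $\Gamma$-liminf inequality of the preceding theorem---now legitimately applicable because the $\tilde u_i$ are uniformly bounded and converge in $L^1$---yields $\limsup_i I_{\epsilon_i}(\tilde u_i)\ge V(u,\Omega)+\int_\Omega a(x)|\nabla_a u|^2+|u-f|^2\,dx$.

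For the reverse inequality I would use the recovery-sequence property of the preceding theorem to produce uniformly bounded $u_i\to u$ in $L^1(\Omega)$ with $(u_i)\subset\Lspace\Omega$ and $V(u,\Omega)+\int_\Omega a(x)|\nabla_a u|^2+|u-f|^2\,dx\ge\limsup_i I_{\epsilon_i}(u_i)$. The decisive point is that uniform boundedness upgrades the convergence to $L^2$ on the bounded domain $\Omega$, since $\|u_i-u\|_2^2\le 2M\|u_i-u\|_1\to 0$; this is precisely the topology demanded by the definition of $F$. Using $|\nabla u_i|^{p_i}\ge|\nabla u_i|-(p_i-1)$ turns $I_{\epsilon_i}(u_i)$ into $\int_\Omega\phi(x,|\nabla u_i|)+|u_i-f|^2\,dx$ up to the vanishing term $(p_i-1)|\Omega|$, and applying the definition of $F$ to the $L^2$-convergent sequence $(u_i)\subset\Lspace\Omega\cap L^2(\Omega)$ gives $\limsup_i\int_\Omega\phi(x,|\nabla u_i|)+|u_i-f|^2\,dx\ge F(u)$.

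The main obstacle, and the only genuine departure from Corollary~\ref{cor:GammaDP}, is the reconciliation of topologies: the preceding theorem lives in the $L^1$-topology along $L^\infty$-bounded functions, whereas $F$ is defined through $L^2$-convergence. Both directions hinge on the elementary fact that for uniformly bounded sequences on a bounded domain $L^1$- and $L^2$-convergence coincide, which makes the two frameworks interchangeable; the extra bookkeeping relative to the previous corollary is the truncation step in the first direction, which forces the $F$-recovery sequence into $L^\infty$ without raising either the gradient energy or, for bounded $f$, the fidelity term.
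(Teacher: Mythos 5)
Your proposal follows the same route the paper intends (it explicitly skips this proof as ``similar to the previous corollary''): the two-sided comparison of Corollary~\ref{cor:GammaDP} with the $L^\infty$-restricted $\Gamma$-convergence theorem in place of Theorem~\ref{thm:GammaDP}, plus the two genuinely new ingredients --- truncating the $F$-recovery sequence to land in $L^\infty$, and using that $L^1$- and $L^2$-convergence coincide for uniformly bounded sequences on a bounded domain --- both of which you identify correctly. The one caveat is your truncation level $M:=\max\{\|u\|_\infty,\|f\|_\infty\}$, which tacitly assumes $f\in L^\infty(\Omega)$, whereas the standing hypothesis is only $f\in L^2(\Omega)$: on $\{|f|>M\}$ truncation can increase the fidelity term. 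This is easily repaired (e.g.\ bound the excess by $4\int_{\{|f|>M\}}|f|^2\,dx$ and let $M\to\infty$ after the $\Gamma$-liminf step, since the resulting inequality holds for every $M\ge\|u\|_\infty$), but as written the step is only justified for bounded data $f$.
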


Next we consider the variable exponent case $\phi(x,t):=t^{p(x)}$, 
where $p:\Omega \to [1,\infty)$. We denote $Y:=\{x \in \Omega: p=1\}$ and define 
$p_\delta(x):=\max\{\delta, p(x)\}$. In \cite[Theorem~1.5]{HarHLT13} 
we proved $\Gamma$-convergence of the $p_\delta$-energy. 
\begin{thm}
Let $\Omega$ be an open rectangular cuboid and let $p$ be strongly log-Hölder continuous, i.e. 
\[
|p(x)-p(z)| \log\Big(e+\frac1{|x-z|}\Big) \le c
\qquad\text{and}\qquad
\lim_{x\to y} |p(x)-1| \log\frac1{|x-y|} = 0
\]
for every $x,z\in \Omega$ and $y\in Y$. Then
\[
D_\epsilon(u):= \int_\Omega |\nabla u|^{p_{1+\epsilon}(x)} + |u-f|^2\, dx
\] 
$\Gamma$-converges in $L^1(\Omega)$-topology to  
\[
D(u):=V(u, Y) + \int_{\Omega\setminus Y} |\nabla u|^{p(x)} + |u-f|^2\, dx.
\]
\end{thm}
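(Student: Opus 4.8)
The plan is to derive this variable-exponent $\Gamma$-convergence as a corollary of the abstract result in Theorem~\ref{thm:Gamma}(2), in exactly the same spirit as Corollary~\ref{cor:GammaDP} was obtained from Theorem~\ref{thm:GammaDP}. The key observation is that for $\phi(x,t):=t^{p(x)}$ the modular energy $\int_\Omega \phi(x,|\nabla u|)^{q}\,dx = \int_\Omega |\nabla u|^{q\,p(x)}\,dx$ does \emph{not} coincide with the variable-exponent Dirichlet energy $\int_\Omega |\nabla u|^{p_{1+\epsilon}(x)}\,dx$ that appears in $D_\epsilon$, because the truncation $p_\delta=\max\{\delta,p\}$ is \emph{additive} in the exponent whereas raising $\phi$ to the power $q$ is \emph{multiplicative}. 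Hence the whole point is to bridge between the multiplicative scaling $p(x)\mapsto p_i\,p(x)$ built into $F_{p_i}$ and the additive shift $p(x)\mapsto p(x)+\epsilon_i$ built into $D_{\epsilon_i}$, using that both reduce to the same value $p=1$ on the critical set $Y$ and both tend to $p(x)$ off $Y$. The most efficient route is to invoke uniqueness of the $\Gamma$-limit: if I can show $F$ and $D$ arise as $\Gamma$-limits of comparable families, they must agree.

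First I would set up the two one-sided inequalities using the recovery-sequence (limsup) halves of Theorem~\ref{thm:Gamma}(2) and of the cited Theorem~1.5 of \cite{HarHLT13}. For the inequality $F(u)\ge D(u)$, fix $u\in L^2(\Omega)$ and apply Theorem~\ref{thm:Gamma}(2)(b) to obtain $u_i\to u$ in $L^2$ with $F(u)\ge \limsup_i \int_\Omega |\nabla u_i|^{p_i\,p(x)} + |u_i-f|^2\,dx$. The task is then to bound $\int_\Omega |\nabla u_i|^{p_i p(x)}\,dx$ from below by something comparable to $\int_\Omega |\nabla u_i|^{p_{\delta_i}(x)}\,dx$ minus a vanishing error, for a suitable $\delta_i\to 1^+$; an elementary pointwise estimate of the form $s^{p_i p(x)}\ge s^{p_{\delta_i}(x)} - c(p_i-1)$ (splitting into $s\le 1$ and $s>1$, and using $p_i p(x)\ge p_{\delta_i}(x)$ whenever $p(x)\ge 1$) should do the work, after which I apply the limsup half of \cite[Theorem~1.5]{HarHLT13} to pass to $D(u)$. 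For the reverse inequality $F(u)\ge D(u)$ is replaced by obtaining $D(u)\ge F(u)$ from the recovery sequence for $D_\epsilon$ together with the definition of $F$, exactly paralleling the second half of Corollary~\ref{cor:GammaDP}; here one uses $\int_\Omega |\nabla u_i|^{p_{1+\epsilon}(x)}\,dx \ge \int_\Omega \phi(x,|\nabla u_i|)\,dx - (\text{error})$ on $Y$, since $p_{1+\epsilon}=1+\epsilon$ there, and the uniform continuity estimates off $Y$.

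The main obstacle I expect is precisely the mismatch between multiplicative and additive perturbation of the exponent, localized near the set $Y=\{p=1\}$, where the strong log-Hölder hypothesis is doing the heavy lifting. On $\{p=1\}$ we have $\phi(x,t)^{p_i}=t^{p_i}$, which is the additive shift, so the two families agree; but on a shrinking neighborhood of $Y$ where $p(x)$ is close to but exceeds $1$, the quantities $p_i\,p(x)$ and $p_{1+\epsilon_i}(x)=\max\{1+\epsilon_i,p(x)\}$ differ, and controlling the energy contribution there is delicate because the total variation $V(u,Y)$ arises exactly as the limit of these near-critical contributions. The strong log-Hölder condition $\lim_{x\to y}|p(x)-1|\log\frac1{|x-y|}=0$ for $y\in Y$ is what guarantees that the error terms generated by comparing $|\nabla u_i|^{p_i p(x)}$ and $|\nabla u_i|^{p_{\delta_i}(x)}$ integrate to something negligible, and it is the same hypothesis that makes \cite[Theorem~1.5]{HarHLT13} valid, so I would lean on that theorem as a black box rather than re-proving the variable-exponent $\Gamma$-convergence.

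If matching the two perturbation scales turns out to be awkward to quantify directly, the cleaner fallback is to note that Theorem~\ref{thm:Gamma}(2) identifies $F$ as \emph{the} $\Gamma$-limit of $F_{p_i}$ and that a routine comparison shows $F_{p_i}$ and $D_{\epsilon_i}$ are $\Gamma$-equivalent (their $\Gamma$-liminf and $\Gamma$-limsup envelopes coincide) once $\epsilon_i$ is chosen as a function of $p_i$ so that $p_i p(x)$ and $p(x)+\epsilon_i$ sandwich each other up to $o(1)$ errors uniformly; then uniqueness of the $\Gamma$-limit forces $F=D$, giving the explicit formula $F(u)=V(u,Y)+\int_{\Omega\setminus Y}|\nabla u|^{p(x)}+|u-f|^2\,dx$. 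I would state the corollary accordingly and keep the proof parallel to Corollary~\ref{cor:GammaDP}, since the double-phase case already demonstrates the template.
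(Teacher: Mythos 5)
There is a genuine gap, and it is structural: your argument is circular. The statement you are asked to prove \emph{is} Theorem~1.5 of \cite{HarHLT13}, restated verbatim in this paper (the paper itself offers no proof of it here --- it is imported as a known result). Your proposal explicitly ``lean[s] on that theorem as a black box,'' i.e.\ it assumes the very statement to be proven. What your argument actually establishes (modulo the elementary exponent comparisons) is the \emph{subsequent corollary}, namely the identification $F=D$ obtained by combining Theorem~\ref{thm:Gamma}(2) with the already-known $\Gamma$-convergence of $D_\epsilon$ and invoking uniqueness of the $\Gamma$-limit. You have conflated the theorem with its corollary: the abstract machinery of this paper cannot produce the explicit form of the limit $D(u)=V(u,Y)+\int_{\Omega\setminus Y}|\nabla u|^{p(x)}+|u-f|^2\,dx$, because Theorem~\ref{thm:Gamma}(2) only identifies the $\Gamma$-limit of $F_{p_i}$ as the abstract relaxation $F$, not as any concrete functional involving the total variation on $Y=\{p=1\}$.

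Concretely, even granting your comparison of the multiplicative perturbation $p_i\,p(x)$ with the truncation $p_{1+\epsilon_i}(x)=\max\{1+\epsilon_i,p(x)\}$ (which does work pointwise, since $p_i\,p(x)\ge p_{1+\epsilon_i}(x)\ge p(x)$ and the discrepancy for $s\le 1$ is $O(p_i-1)$), the most this buys you is that $D_{\epsilon_i}$ and $F_{p_i}$ have the same $\Gamma$-limit, namely $F$. To conclude the theorem you would still have to show $F=D$ \emph{without} citing \cite[Theorem~1.5]{HarHLT13}, and that identification --- where the total variation $V(u,Y)$ emerges from the relaxation of $\int_\Omega |\nabla u|^{p(x)}\,dx$ precisely on the critical set and the strong log-H\"older condition controls the transition region --- is the entire analytic content of the cited theorem (building on \cite{HarHL08}). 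None of that appears in your proposal: there is no liminf inequality for $D_\epsilon$ proved from first principles, no construction of recovery sequences exploiting the geometry of the cuboid and the decay condition $\lim_{x\to y}|p(x)-1|\log\frac{1}{|x-y|}=0$, and no argument producing $V(u,Y)$ in the limit. If the intended answer is simply to cite \cite{HarHLT13}, then the citation alone suffices and the surrounding construction is unnecessary; if a self-contained proof is intended, the essential work is missing.
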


Again, the uniqueness of the 
$\Gamma$-limit allows us to obtain a concrete form of $E$. The proof is again 
similar to Lemma~\ref{cor:GammaDP} and thus skipped.

\begin{cor} 
Suppose that $\Omega$ is a rectangular cuboid and $p$ is strongly 
$\log$-H\"older continuous.
Then
\[
F(u)= V(u, Y) + \int_{\Omega\setminus Y} |\nabla u|^{p(x)} + |u-f|^2\, dx,
\]
where $F$ is defined with the variable exponent function $\phi(x,t):=t^{p(x)}$. 
\end{cor}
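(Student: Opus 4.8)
The plan is to mirror the proof of Corollary~\ref{cor:GammaDP}, playing Theorem~\ref{thm:Gamma}(2) for the functional $F_{p_i}$ built from $\phi(x,t)=t^{p(x)}$ (so that $\phi(x,|\nabla u|)^{p_i}=|\nabla u|^{p_i\,p(x)}$) against the $\Gamma$-convergence of $D_\epsilon$ to $D$ from the preceding theorem (so that $D_{p_i-1}(u)=\int_\Omega |\nabla u|^{\max\{p_i,p(x)\}}+|u-f|^2\,dx$, with $p_i=1+\epsilon$). These two functionals differ only through the exponent on $|\nabla u|$, so the entire argument reduces to comparing the three exponents $p_i\,p(x)$, $\max\{p_i,p(x)\}$ and $p(x)$ by elementary pointwise inequalities. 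First I would record that strong $\log$-Hölder continuity on the bounded domain $\Omega$ forces $p^+:=\esssup_\Omega p<\infty$, which is what will make the correction constants uniform in $x$.

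The technical core is the estimate that for $\alpha\ge\beta\ge 1$ and $t\ge 0$ one has $t^\alpha\ge t^\beta-\tfrac{\alpha-\beta}{e}$. For $t\ge 1$ this is just monotonicity of $s\mapsto t^s$, while for $t\in(0,1)$ one writes $t^\beta-t^\alpha=t^\beta\big(1-e^{(\alpha-\beta)\ln t}\big)\le(\alpha-\beta)\,t^\beta(-\ln t)$ using $1-e^{-x}\le x$, and then bounds $\sup_{s\in(0,1)}\big(-s^\beta\ln s\big)=\tfrac{1}{e\beta}\le\tfrac1e$ (the supremum being attained at $s=e^{-1/\beta}$). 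Applying this with $\alpha=p_i\,p(x)$ and $\beta=\max\{p_i,p(x)\}$ gives $|\nabla u|^{p_i\,p(x)}\ge|\nabla u|^{\max\{p_i,p(x)\}}-c(p_i-1)$, since $\alpha-\beta\lesssim p_i-1$ in both cases $p_i\ge p(x)$ and $p_i<p(x)$ (the implicit constant depending only on $p^+$); applying it with $\alpha=\max\{p_i,p(x)\}$ and $\beta=p(x)$ gives $|\nabla u|^{\max\{p_i,p(x)\}}\ge|\nabla u|^{p(x)}-c(p_i-1)$, where now $\alpha-\beta=\max\{p_i-p(x),0\}\le p_i-1$. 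Here $c=c(p^+)$ is independent of $x$, which is precisely what boundedness of $p$ buys us.

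For $F(u)\ge D(u)$ I would apply Theorem~\ref{thm:Gamma}(2) to produce $u_i\to u$ in $L^2(\Omega)$ with $F(u)\ge\limsup_i\int_\Omega |\nabla u_i|^{p_i\,p(x)}+|u_i-f|^2\,dx$, insert the first elementary estimate to bound the integrand below by $|\nabla u_i|^{\max\{p_i,p(x)\}}+|u_i-f|^2-c(p_i-1)$, discard the vanishing term $c(p_i-1)|\Omega|$, and conclude via the $\Gamma$-liminf inequality for $D_\epsilon$ that the resulting $\limsup$ dominates $D(u)$. For the reverse inequality $D(u)\ge F(u)$ I would use the recovery-sequence property of the $D_\epsilon$-convergence to obtain $u_i\to u$ with $D(u)\ge\limsup_i\int_\Omega |\nabla u_i|^{\max\{p_i,p(x)\}}+|u_i-f|^2\,dx$, apply the second elementary estimate to replace $\max\{p_i,p(x)\}$ by the $\phi$-exponent $p(x)$, again absorb the $c(p_i-1)$ term, and invoke the definition of $F$ (using, as in Corollary~\ref{cor:GammaDP}, that the fidelity term keeps $(u_i)$ controlled in $L^2$). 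Uniqueness of the $\Gamma$-limit then forces the two bounds to coincide with the claimed formula. The main obstacle I anticipate is not either $\Gamma$-convergence input, but establishing the pointwise inequalities with a constant uniform in $x$ all the way down to $t=0$, where for $t\in(0,1)$ the ordering of the powers is reversed; the identity $\sup_{s\in(0,1)}\big(-s^\beta\ln s\big)=\tfrac{1}{e\beta}$ together with $\beta\ge 1$ is exactly what makes the correction terms simultaneously uniform in $x$ and of order $O(p_i-1)$.
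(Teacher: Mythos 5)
Your proposal is correct and follows essentially the same route as the paper, which omits this proof as ``similar to Corollary~6.2'': you play the $\Gamma$-convergence of $F_{p_i}$ (Theorem~1.1(2)) against that of $D_\epsilon$ and compare integrands pointwise, exactly as the paper does with $I_\epsilon$ in the double phase case. Your elementary inequality $t^\alpha\ge t^\beta-\tfrac{\alpha-\beta}{e}$ for $\alpha\ge\beta\ge1$, with the uniformity in $x$ coming from $p^+<\infty$, is the correct generalization of the paper's estimate $t^p\ge t-(p-1)$ and is in fact spelled out more carefully than the paper's version.
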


%


\end{document}